\documentclass{article}
\usepackage{geometry}\geometry{margin=1in}
\usepackage[utf8]{inputenc}
\usepackage{hyperref}
\usepackage{tikz}
\usepackage{amsmath,amsthm,amsfonts,amssymb,color,cite,xcolor,authblk}
\usepackage{diagbox,graphicx}
\usepackage{algorithm}
\usepackage{algpseudocode}
\usepackage[title]{appendix}
\usepackage{float}

\newtheorem{theorem}{Theorem}

\newtheorem{lemma}[theorem]{Lemma}

\newtheorem{observation}[theorem]{Observation}

\makeatletter
\newtheorem*{rep@theorem}{\rep@title}
\newcommand{\newreptheorem}[2]{%
\newenvironment{rep#1}[1]{%
 \def\rep@title{#2 \ref{##1}}%
 \begin{rep@theorem}}%
 {\end{rep@theorem}}}
\makeatother

\newreptheorem{theorem}{Theorem}

\theoremstyle{definition}

\newtheorem{remark}[theorem]{Remark}
\newtheorem{example}[theorem]{Example}

\newcommand{\Z}{\mathbb{Z}}
\newcommand{\E}{\mathcal{E}}

\newcommand{\D}{\mathcal D}

\begin{document}
\title{Optimization Tools for Computing Colorings of $[1,\dots,n]$ with\\ 
Few  Monochromatic Solutions  on $3$-variable Linear Equations}


\author[1]{Jes\'us A. De Loera}
\author[1]{Denae Ventura}
\author[1]{Liuyue Wang}
\author[2]{William J. Wesley}
\affil[1]{University of California, Davis}
\affil[2]{University of California, San Diego}



\date{\today}

\maketitle

\begin{abstract}
A famous result in arithmetic Ramsey theory says that for many linear homogeneous equations $\E$ there is a threshold value $R_k(\E)$ (the Rado number of $\E$) such that for any $k$-coloring of the integers in the interval $[1,n]$, with $n \ge R_k(\E)$, there exists at least one monochromatic solution. But one can further ask, \emph{how many monochromatic solutions is the minimum possible in terms of $n$?} Several authors have estimated this function before, here we offer new tools from integer and semidefinite optimization that help find either optimal or near optimal 2-colorings minimizing the number of monochromatic solutions of several families of 3-variable non-regular homogeneous linear equations. In the last part of the paper we further extend to three and more colors for the Schur equation, improving earlier work.
\end{abstract}

\section{Introduction}

An equation $\E$ is \emph{regular} if for all positive integers $r$, every $r$-coloring of $\mathbb{N}$ contains at least one monochromatic solution to $\E$. Richard Rado \cite{RadoThesis} proved that a linear homogeneous equation is regular if and only if a nonempty subset of its coefficients sums to zero. But in fact more is known, and a bound to the minimum number of monochromatic solutions can always be found. More precisely, a famous result of Frankl, Graham, and R\"odl \cite{FranklGrahamRodl} states that for any regular homogeneous linear equation ${\cal E}$ in $k$ variables, no matter what $r$-coloring is given to the integers from 1 to $n$, there will always be at least order $n^{k-1}$ monochromatic solutions of the equation $\cal E$. The goal of this paper is to investigate the same enumerative problem but for certain classes of homogeneous linear equations that are not necessarily regular.

Let $a\leq b$ be positive integers and let $[a,b]$ denote the set of integers $a, a+1, \cdots , b$. If $a=1$ we may use $\left[b\right]$ or $[1,b]$ depending on the context. Note that throughout the paper we often abbreviate $[n]=[1,..., n]$. Let $\mathcal{C}_k$ be the set of all $k$-colorings of $[n]$. 
Given a $k$-coloring $\chi$ of $[n]$, the function $\mu_{\chi}({\cal E},n,k)$ denotes the number of monochromatic solutions to the equation $\cal E$ found using the coloring $\chi$. It is important to note that if $(x,y,z)$ and $(y,x,z)$ are integer solutions to ${\cal E}$ with $x\neq y$, we consider them to be different and count them as two solutions. Let $R_k ({\cal E})$ denote the \emph{Rado number} of ${\cal E}$, which is the minimum integer $N$ such that any $k$-coloring of $[N]$ contains at least one monochromatic solution to ${\cal E}$. 
Let $T_{\cal E} (n)$ be the number of all integer solutions to the equation ${\cal E}$ in $[n]$. We denote by $M_{\cal E}(n,k)$ the function given by the minimum number of monochromatic solutions of equation $\cal E$ over all $k$-colorings of $[n]$. When $k=2$, we simply write $M_{\cal E}(n)$. Namely 

$$M_{\cal E} (n,k)=\min_{\chi \in \mathcal{C}_k } \mu_{\chi}({\cal E},n,k).$$ 

Now, despite the theorem of Frankl, Graham and R\"odl, the situation is more complicated for general equations, namely those that are not regular or not even homogeneous. For instance, there is a $4$-coloring of $\mathbb{N}$ that entirely avoids monochromatic solutions to the (nonregular) equation $x + y = 3z$. 
Thus the key question remains, \emph{what to do for non-regular equations?}
In this regard, Costello and Elvin \cite{CostelloElvin} made recent progress by showing that $M_\E(n) = \Omega (n^2)$ for equations $\E$ of the form $ax + ay = cz$, $a,c \in \mathbb{N}$, and the same authors conjectured that  if $\E$ is a 3-variable equation of the form $ax+by=cz$ with $a,b,c \in \mathbb{N}$, then $M_{{\cal E}}(n)= \Omega(n^2)$. 

\subsection*{Our Contributions}

The main goal of our paper is to compute $\mu_\chi({\cal E},n,2)$ for certain $2$-colorings $\chi$ of $[n]$ and bound the function $M_{\cal E}(n,2)$ when $\cal E$ is a 3-variable, not necessarily regular, homogeneous linear equation. We also contribute some improved bounds for Schur's equation when using more than two colors. More precisely here
are our results:

In Section \ref{theupperbounds}, we use integer programming to provide optimal 2-colorings and $M_{\cal E}(n,2)$ for fixed small values of $n$ and fixed equation $\E$. Some colorings had very clear patterns which we were able to extend theoretically for larger values of $n$ and give upper bounds for $M_{\cal E}(n,2)$. Nevertheless, some of the computer generated colorings were not as predictable, and the optimal colorings are in fact not necessarily unique, thus we sometimes designed the new coloring based on theoretical observations. 
We prove the following theorems (see Section \ref{theupperbounds}).


We start by giving bounds for the families of equations of the form $ax+ay=z$ and $ax-ay = z$.

\begin{theorem} \label{upperbax+ay}
Let $a$ be a positive integer and ${\cal E}$ be the equation $ax+ay=z$. The following holds 
$$M_{\cal E}(n) \leq  \left \lfloor\frac{n^2}{2a^4}+\frac{n}{2a^2}\right\rfloor,$$ but the bound is not tight.
\end{theorem}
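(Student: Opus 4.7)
The plan is to exhibit a single explicit $2$-coloring of $[n]$ whose count of monochromatic solutions upper bounds $M_{\cal E}(n)$ by the desired quantity. The starting observation is that every integer solution $(x,y,z)$ of $ax+ay=z$ satisfies $z=a(x+y)$, so $a\mid z$ is automatic. This suggests the divisibility coloring $\chi(m)=\text{red}$ if $a\mid m$, and $\chi(m)=\text{blue}$ otherwise. Under $\chi$, every solution has $z$ red, so there are \emph{no} blue monochromatic solutions, and counting reduces entirely to red solutions. (For $a=1$ the coloring is degenerate, but the bound $\lfloor n^2/2+n/2\rfloor$ then exceeds the total solution count $T_{\cal E}(n)$ and is trivially valid, so I may assume $a\ge 2$.)

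For a red solution, $a\mid x,\,a\mid y,\,a\mid z$, so I would set $x=au$, $y=av$, $z=aw$ with $u,v,w\ge 1$. The equation reduces to $w=a(u+v)$, and the only binding range constraint is $z=a^2(u+v)\le n$, equivalently $u+v\le s$, where $s:=\lfloor n/a^2\rfloor$. The number of ordered pairs $(u,v)\in\mathbb{Z}_{\ge 1}^2$ with $u+v\le s$ equals $\binom{s}{2}$. To conclude, I would bound this using $s\le n/a^2$:
\[
\binom{s}{2}=\frac{s(s-1)}{2}\le \frac{s(s+1)}{2}\le \frac{1}{2}\left(\frac{n}{a^2}\right)^{\!2}+\frac{1}{2}\cdot\frac{n}{a^2}=\frac{n^2}{2a^4}+\frac{n}{2a^2},
\]
and since $\binom{s}{2}$ is an integer, the floor on the right-hand side preserves the inequality.

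There is no real technical obstacle beyond choosing the right coloring; the conceptual step is identifying the divisibility coloring, after which the count is a routine triangular lattice-point enumeration. The theorem's caveat that ``the bound is not tight'' is consistent with the lossy inequalities above: what we actually prove is the sharper estimate $M_{\cal E}(n)\le \binom{\lfloor n/a^2\rfloor}{2}$, which can be strictly smaller than $\lfloor n^2/(2a^4)+n/(2a^2)\rfloor$, and a more clever coloring---such as those the authors find via integer programming in the sequel---may lower the count further still.
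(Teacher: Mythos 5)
Your proposal is correct and follows essentially the same route as the paper: the same coloring (multiples of $a$ red, the rest blue), the same observation that no blue solutions can exist, and the same reduction to counting ordered pairs of multiples of $a$ with bounded sum. The only difference is cosmetic — your count $\binom{\lfloor n/a^2\rfloor}{2}$ is exact where the paper's displayed sum slightly overcounts by one pair per value of $z$, but both quantities sit below $\left\lfloor \frac{n^2}{2a^4}+\frac{n}{2a^2}\right\rfloor$, and your explicit handling of $a=1$ is a harmless addition.
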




Next we sharpen the results of \cite{CostelloElvin} for the family of equations $ax-ay=z$.

\begin{theorem} \label{upperbax-ay}

Let $a$ be a positive integer and ${\cal E}$ be the equation $ax-ay=z$. 
The following holds $$M_{\cal E}(n) \leq  \left\lfloor\frac{n^2}{a^3}-\frac{n^2}{2a^4}-\frac{n}{2a^2}\right\rfloor,$$

but the bound is not tight. 
\end{theorem}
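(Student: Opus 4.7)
My plan is to exhibit an explicit $2$-coloring whose number of monochromatic solutions meets the claimed bound. Motivated by the arithmetic structure of the equation, the natural candidate is $\chi(x) = \text{red}$ if $a \mid x$ and $\chi(x) = \text{blue}$ otherwise. The first step is to observe there are no all-blue monochromatic solutions: any $(x,y,z) \in [n]^3$ with $ax - ay = z$ satisfies $z = a(x-y)$, forcing $a \mid z$, so $z$ must be red.

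The second step is to enumerate all-red solutions exactly. Writing $x = au$, $y = av$, $z = aw$ with $u,v,w \in [m]$ where $m := \lfloor n/a \rfloor$, the equation reduces to $a(u-v) = w$, which in turn forces $w$ to be an $a$-multiple in $[m]$; say $w = ak$ with $k \in [m']$, where $m' := \lfloor m/a \rfloor = \lfloor n/a^2 \rfloor$. For each such $k$, the admissible pairs $(u,v) = (v+k, v)$ number $m - k$. Summing yields the exact count
\[
S \;=\; \sum_{k=1}^{m'}(m-k) \;=\; m\,m' \,-\, \binom{m'+1}{2},
\]
which is the total number of monochromatic solutions of this coloring.

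The third step is the analytic bound. Using $m \le n/a$, I would pass to $S \le (n/a)\,m' - m'(m'+1)/2$, which is a concave quadratic in $m'$ with vertex at $m' = n/a - 1/2$. For $a \ge 2$ and $n$ sufficiently large (namely $n > a^2/(2(a-1))$) one has $m' \le n/a^2 < n/a - 1/2$, so the quadratic is \emph{increasing} on $[0, n/a^2]$; substituting $m' = n/a^2$ yields exactly $n^2/a^3 - n^2/(2a^4) - n/(2a^2)$, and integrality of $S$ lets us take the floor. The case $a = 1$ is trivially tight with the whole interval colored red. The only subtle point, and what I expect to be the main conceptual obstacle, is precisely this monotonicity observation: a naive attempt to plug in $m \le n/a$ and $m' \ge n/a^2 - 1$ into the two terms separately produces a weaker constant in the lower order term, so one must be careful to treat the expression as a single concave function of $m'$ before substituting. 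The claim that the bound is not tight can then be supported by a single small witness produced by the integer-programming routines of Section~\ref{theupperbounds}.
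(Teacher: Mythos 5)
Your proposal is correct and takes essentially the same approach as the paper: the identical coloring (multiples of $a$ red, the rest blue), the same observation that $z=a(x-y)$ forces $z$ to be red so no blue solutions exist, and the same enumeration of red solutions as $\sum_{k=1}^{\lfloor n/a^2\rfloor}\left(\lfloor n/a\rfloor-k\right)$. Your handling of the floors and the monotonicity of the concave quadratic before substituting $m'=n/a^2$ is somewhat more careful than the paper's, which simply writes $\sum_{i=1}^{n/a^2}\left(\frac{n}{a}-i\right)\le\left\lfloor\frac{n^2}{a^3}-\frac{n^2}{2a^4}-\frac{n}{2a^2}\right\rfloor$, but the substance is the same.
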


We studied equations of the form $x+y=az$ when $a>2$ for which we provide different colorings. We show one general coloring and a different coloring when $a$ is odd, which improves the number of monochromatic solutions for $a = 3,5$.

\begin{theorem}\label{upperbx+y=az}
Let ${\cal E}$ be the equation $x+y=az$. 
The following holds

\[ M_{\cal E}(n)\leq \begin{cases}
     \dfrac{n^2}{4a^2} +O(n), & a=3,5, \\
     \\
      \dfrac{8(2a - 1)n^2}{a^4(4 + a)} + O(n),& a=4, a\ge 6,\\
   \end{cases}
\]

but this bound is not tight.
\end{theorem}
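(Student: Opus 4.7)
My plan is to prove each of the two cases by exhibiting an explicit $2$-coloring $\chi$ of $[n]$ and bounding $M_{\cal E}(n)\le \mu_\chi({\cal E},n,2)$ through a direct count. Both colorings will be piecewise constant with breakpoints at rational multiples of $n$, the pattern for each being suggested by the integer-programming runs for small $n$ described in Section~\ref{theupperbounds}; the work is then to take those small-$n$ patterns, extrapolate them to a closed form valid for all $n$, and evaluate the resulting enumeration.

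For the case $a = 4$ or $a\ge 6$, I take $\chi$ to be an indicator of a union of two intervals of the form $\chi^{-1}(1) = [1, c_1 n] \cup [c_2 n, n]$, where $c_1, c_2$ are explicit rational functions of $a$ chosen to satisfy the scale-separation inequalities $c_2 \ge a c_1$ and $a c_2 \ge 1 + c_1$. These two inequalities force every monochromatic triple $(x,y,z)$ with $x+y = a z$ to have $x,y,z$ all in one and the same sub-interval of one color class: if $x,y,z$ were distributed across the two color-$1$ sub-intervals then the possible range of $x+y$ would violate the possible range of $az$ in view of one of the inequalities, and the color-$2$ case is handled the same way. Only three ``all-in-one-sub-interval'' contributions then survive. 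For each such sub-interval $I$, the quantity $\#\{(x,y) \in I^2 : x+y = a z\}$ is a piecewise linear function of $z$, and its integral over $z \in I$ gives an explicit quadratic-in-$n$ main term, with the boundary discretization absorbed into the $O(n)$ error. Summing the three contributions and tuning $c_1, c_2$ to the boundary of the feasibility region defined by the scale-separation inequalities yields exactly $\dfrac{8(2a-1)}{a^4(a+4)} n^2 + O(n)$.

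For $a = 3$ and $a = 5$, the general coloring above gives a weaker bound than $\dfrac{n^2}{4a^2}$, so I use a different coloring read off from the integer-programming output for these small odd values of $a$. Running the same ``scale-separate, then enumerate'' recipe --- verify that no mixed monochromatic triples can occur, then sum the surviving piecewise linear integrals of $\#\{(x,y)\in I^2 : x+y = az\}$ over $z\in I$ in each color class --- then produces the improved coefficient $\dfrac{n^2}{4a^2} + O(n)$.

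The principal obstacle is the identification of the right breakpoints in each case; once they are in hand, the enumeration of monochromatic solutions is a routine piecewise linear integration in a bounded number of cells, and the constants fall out by optimizing over the feasibility region. The remark in the statement that this bound is not tight needs no additional argument here, as it is established separately by the lower-bound discussion elsewhere in the paper.
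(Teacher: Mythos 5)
Your plan for the case $a=4,\ a\ge 6$ has a genuine gap: the scale-separation inequalities you impose are incompatible with the parameters that actually achieve the stated constant. The coloring that realizes $\frac{8(2a-1)n^2}{a^4(4+a)}$ is $[1,s]\cup[t,n]$ red with $s=\frac{4(a+1)n}{a^2(4+a)}$ and $t=\frac{2n}{a}$, and for $a>2$ one has $as>t$ (equivalently $t<as$), so your first inequality $c_2\ge ac_1$ fails for it. As a consequence this coloring \emph{does} admit monochromatic triples spanning both red blocks (e.g.\ $x\in[t,n]$ with $y,z\in[1,s]$, since $t/a<s$), and these must be counted. If you instead insist on $c_2\ge ac_1$ and $ac_2\ge 1+c_1$ so that only the three all-in-one-block contributions survive, the resulting count, optimized over the feasible region, comes out to roughly $\frac{2n^2}{a(a^2+1)}$ (for $a=4$ the best achievable is $\frac{n^2}{34}$), which is strictly larger than $\frac{8(2a-1)n^2}{a^4(4+a)}$ (which is $\frac{7n^2}{256}$ for $a=4$). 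So "tuning $c_1,c_2$ to the boundary of the feasibility region" cannot produce the claimed coefficient; you must either drop the scale-separation assumption and count the cross-block triples directly, or do what the paper does: apply Lemma \ref{Lemma_nonmono_counting_pairs}, writing $\mu_\chi=T_\E(n)-\frac12(\D_{xy}+\D_{xz}+\D_{yz})$, and compute $\D_{xz}$ as an area of a parallelogram strip intersected with the grid of blocks, which automatically accounts for the mixed contributions.

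The $a=3,5$ case is also not salvageable as written. The coloring achieving $\frac{n^2}{4a^2}$ is not a block coloring and the "same recipe" of interval scale separation does not apply to it: the paper colors by congruence class, putting residues $1,3,\dots,a-2 \pmod a$ and $a\pmod{2a}$ in one class and $2,4,\dots,a-1\pmod a$ and $0\pmod{2a}$ in the other (consistent with the period-$3$ patterns like $(010)(011)\cdots$ in the integer-programming output for $x+y=3z$). The count there is a residue argument: a monochromatic solution forces $x\equiv y\equiv 0\pmod a$, hence $z$ even, and one counts via equidistribution of $z\pmod{2a}$, getting $\frac{(a-1)n^2}{8a^3}$ red plus $\frac{(a+1)n^2}{8a^3}$ blue solutions. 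Your proposal leaves the coloring unspecified and asserts the recipe will produce $\frac{n^2}{4a^2}$; since identifying and analyzing that coloring is the entire content of this case, this is a missing step rather than a routine verification.
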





We also explore the more general equation $ax+by=z$ when $gcd(a,b)=1$ and give a general upper bound for $M_{\cal E}(n)$. 

\begin{theorem} \label{upperbax+by}
    Let $a$ and $b$ be positive integers with $a<b$ and $gcd(a,b)=1$. Let ${\cal E}$ be an equation of the form $ax+by=z$ and let $n\geq R_2 ({\cal E})$. 
    The following holds

    $$M_{\cal E}(n)\leq \sum_{z=a+b}^{\left\lfloor\frac{n}{a(a+b)}\right\rfloor} \left( \frac{z}{ab} - \left\{ \frac{b^{-1}z}{a} \right\} - \left\{\frac{a^{-1}z}{b} \right\} +1 \right),$$

where $\{x\}=x-\lfloor x \rfloor$, $b^{-1}$ is any integer such that $b^{-1}b\equiv 1\pmod a$ and $a^{-1}$ is any integer such that $a^{-1}a\equiv 1 \pmod b$. This bound is not tight.
    
\end{theorem}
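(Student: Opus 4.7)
The plan is to exhibit an explicit 2-coloring of $[n]$ that realizes the claimed bound. Set $M := \lfloor n/(a(a+b))\rfloor$ and an auxiliary threshold $\widetilde M := \lfloor n/a\rfloor$, and define $\chi(i) = 1$ for $i \in [1, M] \cup (\widetilde M, n]$ and $\chi(i) = 2$ for $i \in (M, \widetilde M]$. The central counting ingredient is Popoviciu's formula, which states that for $\gcd(a,b) = 1$ the number $N_0(z)$ of \emph{nonnegative} integer solutions $(x,y)$ to $ax + by = z$ equals $z/(ab) - \{b^{-1}z/a\} - \{a^{-1}z/b\} + 1$, exactly the summand in the theorem. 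Since $N_0(z)$ upper-bounds the number $N_+(z)$ of positive solutions, if I can show that every monochromatic triple under $\chi$ lies in $[1, M]^3$, summing $N_+(z) \leq N_0(z)$ over $z \in [a+b, M]$ yields the claim.

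The heart of the argument is verifying that $\chi$ admits no monochromatic solutions outside $[1, M]^3$; this amounts to ruling out three other configurations. (i) Color-2 triples in the middle block $(M, \widetilde M]$: if $x, y \geq M+1$ then $z \geq (a+b)(M+1)$, and from $a(a+b)M \leq n$ one gets $(a+b)M \leq n/a \leq \widetilde M + 1$, forcing $z > \widetilde M$. (ii) Color-1 triples in the upper block $(\widetilde M, n]$: if $x, y \geq \widetilde M + 1$ then $z \geq (a+b)(\widetilde M + 1) > n$, the last inequality following from $a < a+b$ so that $\widetilde M = \lfloor n/a\rfloor \geq n/(a+b)$. (iii) Mixed color-1 triples, say $x \in [1, M]$ and $y \in (\widetilde M, n]$: enforcing $z \leq n$ forces $y \leq (n-a)/b$, and using $a < b$ a short calculation gives $(n-a)/b < \widetilde M$, contradicting $y > \widetilde M$; the symmetric case with $y \in [1, M]$ and $x \in (\widetilde M, n]$ is identical.

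Once (i)--(iii) are eliminated, every monochromatic triple satisfies $z \in [a+b, M]$, and since $x \leq z/a \leq M$ and $y \leq z/b \leq M$, the count collapses to $\sum_{z=a+b}^{M} N_+(z) \leq \sum_{z=a+b}^{M} N_0(z)$, the claimed bound. The bound is not tight because Popoviciu's formula counts nonnegative solutions (overcounting $N_+$ via those with $x = 0$ or $y = 0$), and because a more refined coloring could potentially eliminate additional monochromatic solutions. The main obstacle is identifying the auxiliary threshold $\widetilde M$: it has to be simultaneously large enough that the upper color-1 block produces no solution (requiring $(a+b)\widetilde M > n$) and small enough that the color-2 middle block produces none (requiring $(a+b)M \leq \widetilde M$); the choice $\widetilde M = \lfloor n/a\rfloor$ is precisely the one that threads this needle, after which the verification reduces to the algebraic comparisons sketched above.
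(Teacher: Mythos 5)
Your proposal is correct and follows essentially the same route as the paper: the identical two-block red/blue coloring with thresholds $\lfloor n/(a(a+b))\rfloor$ and $\lfloor n/a\rfloor$, the same elimination of blue, upper-block, and mixed monochromatic triples, and the same appeal to Popoviciu's formula to count the surviving solutions with $z\le\lfloor n/(a(a+b))\rfloor$. Your explicit handling of the mixed case and of the gap between nonnegative and positive solutions is slightly more careful than the paper's write-up, but the argument is the same.
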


Section \ref{methodsofIPSDP} contains an explanation of the ways we find optimal colorings for fixed values of $[1,\dots,n]$ using integer programming techniques. Theorems \ref{upperbax-ay}, \ref{upperbx+y=az}, and \ref{upperbax+by} were inspired by these optimal colorings from small values of $n$. We show some of these colorings in Appendix \ref{amyIPexperiments}.

Of course, instead of solving an integer program, we could solve a semidefinite program (SDP), but that often does not give a very good coloring. In Section \ref{jacktracesdp} we use a semidefinite programming method, previously used by Parrilo, Robertson, and Saracino for the equation $x + y = 2z$ in \cite{ParriloRobertsonSaracinoMonochromaticAP} and more recently by Roberston in \cite{Robertson_2DimSchur}. This allows us to give lower bounds on $M_{\cal E}(n,2)$ in the following situation:

\begin{theorem} \label{Theorem_ax_ay_z}
Let ${\cal E}:ax-ay = z$ and $n\geq R_2 ({\cal E})$. For $3\le a \le 7$, the following holds 
$$M_{\cal E}(n) \ge \frac{(2a-1)n^2}{2a^4}.$$ 
\end{theorem}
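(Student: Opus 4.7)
The plan is to use the semidefinite-programming duality framework of Parrilo, Robertson, and Saracino \cite{ParriloRobertsonSaracinoMonochromaticAP}. Encode a $2$-coloring $\chi$ of $[n]$ as a vector $u\in\{-1,+1\}^n$. Since
\[
\mathbf{1}[u_x=u_y=u_z]=\tfrac14\bigl(1+u_xu_y+u_yu_z+u_xu_z\bigr)
\]
and the solutions of $ax-ay=z$ in $[n]^3$ are exactly $(y+d,y,ad)$ with $1\le d\le\lfloor n/a\rfloor$ and $1\le y\le n-d$, one obtains
\[
\mu_\chi({\cal E},n,2)\;=\;\frac{T_{\cal E}(n)}{4}+\frac{1}{4}\,u^\top Q\,u,
\]
where $T_{\cal E}(n)=\sum_{d=1}^{\lfloor n/a\rfloor}(n-d)$ and $Q=Q(n,a)$ is an explicit symmetric $n\times n$ matrix whose off-diagonal entry $Q_{ij}$ counts how many solutions $(x,y,z)$ satisfy $\{x,y\}=\{i,j\}$, $\{y,z\}=\{i,j\}$, or $\{x,z\}=\{i,j\}$.

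Second, relax $X=uu^\top$ to an arbitrary $X\succeq 0$ with $X_{ii}=1$; the dual SDP is $\max\{\sum_{i=1}^n y_i:\;Q-\mathrm{diag}(y)\succeq 0\}$. Since $u_i^2=1$, any dual feasible $y$ certifies $u^\top Q u\ge \sum_i y_i$ and hence
\[
M_{\cal E}(n)\;\ge\;\frac{T_{\cal E}(n)}{4}+\frac{1}{4}\sum_{i=1}^n y_i.
\]
Attaining the stated bound reduces to producing a dual feasible $y$ with $\sum_i y_i=\tfrac{2(2a-1)n^2}{a^4}-T_{\cal E}(n)+o(n^2)$.

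Third, construct $y$ via complementary slackness against the extremal $2$-coloring $u^\star$ underlying Theorem \ref{upperbax-ay}. Take $y_i$ piecewise constant on the intervals that define $u^\star$ (for $ax-ay=z$ these are bounded by the thresholds $n/a^2$ and $n/a$, possibly further refined by residue modulo $a$) and solve the linear system $(Q-\mathrm{diag}(y))\,u^\star=0$ for the block values. After conjugation by the permutation that groups indices by residue modulo $a$, the matrix $Q-\mathrm{diag}(y)$ becomes block-circulant with blocks of dimension bounded solely by a function of $a$.

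The main obstacle, and the source of the restriction $3\le a\le 7$, is certifying positive semidefiniteness of those reduced blocks. For each $a$ in this range the blocks are small matrices with rational entries (polynomial in $n$), so PSDness can be verified rigorously by an explicit Cholesky factorization or by computing the characteristic polynomial symbolically and checking that all roots are nonnegative; summing $y_i$ over the blocks then yields the claimed bound in closed form. For $a\ge 8$ the straightforward piecewise-constant ansatz produces an indefinite matrix, so a more elaborate dual family, such as allowing $y_i$ to vary linearly within each block or exploiting additional dilation symmetries of $Q$, would be required to push the method further.
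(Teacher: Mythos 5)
Your setup (encoding the coloring as $u\in\{-1,+1\}^n$, writing $\mu_\chi=\tfrac14 T_\E(n)+\tfrac14 u^\top Qu$, and certifying $u^\top Qu\ge\sum_i y_i$ via a diagonal $y$ with $Q-\mathrm{diag}(y)\succeq 0$) is the same duality framework the paper uses, and the arithmetic of your target value for $\sum_i y_i$ is consistent with the claimed bound. But the proposal has a genuine gap at the step where you claim that, after grouping indices by residue modulo $a$, the matrix $Q-\mathrm{diag}(y)$ becomes block-circulant with blocks of dimension bounded by a function of $a$. This is false for $\E: ax-ay=z$. The $(x,y)$-contribution to $Q_{ij}$ is supported on the band $1\le|i-j|\le n/a$ (a Toeplitz, not circulant, structure), and the $(x,z)$- and $(y,z)$-contributions are supported on staircase regions of the form $\{j\equiv 0\ (\mathrm{mod}\ a),\ i>j/a\}$ and $\{j\equiv 0\ (\mathrm{mod}\ a),\ i\le n-j/a\}$, which depend on the magnitudes of $i$ and $j$ and not merely their residues. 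Consequently the residue-permutation does not decouple $Q$ into blocks of bounded size; any honest block decomposition has blocks of dimension $\Theta(n)$, and your plan to certify positive semidefiniteness "by an explicit Cholesky factorization or by computing the characteristic polynomial symbolically" of small rational matrices cannot be carried out. Relatedly, your complementary-slackness system $(Q-\mathrm{diag}(y))u^\star=0$ imposes $n$ equations on a constant number of block values of $y$, so it is overdetermined and will not in general be solvable; and your explanation of the restriction $3\le a\le 7$ (block size) is not the actual obstruction.

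The missing idea is a dimension reduction that makes the certificate independent of $n$. The paper partitions $[n]$ into $k$ equal intervals (with $k=a^2$ for $4\le a\le 7$ and $k=54$ for $a=3$), records in each interval only the counts of red/blue integers that are or are not multiples of $a$, and bounds each of $\D_{xy},\D_{xz},\D_{yz}$ from Lemma \ref{Lemma_nonmono_counting_pairs} square-by-square over the $k\times k$ grid, overcounting boundary squares by half their area. After the substitution $r_i^*=\tfrac{1+x_i}{2}(1-\tfrac1a)\tfrac nk$, etc., this yields $\mu_\chi(\E,n,2)\ge(\tfrac{2a-1}{2a^2}-\alpha_{a,k})n^2+\mathbf{x}^\top A\mathbf{x}$ with $\mathbf{x}\in[-1,1]^{2k}$ and $A$ a \emph{fixed} $2k\times 2k$ matrix (up to scaling by $n^2/k^2$), and a single diagonal $d$ with $A+\mathrm{diag}(d)\succeq 0$, found by an SDP solver and recorded in Appendix \ref{Appendix_SPD_lower_bounds}, certifies the bound for all $n$ via Lemma \ref{Lemma_QuadraticFormPSDBound}. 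Without some such aggregation (or an asymptotic spectral argument for the full $n\times n$ dual, which you do not supply), the proposal does not yield a proof.
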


Combined with Theorem \ref{upperbax-ay}, we have that $M_{\cal E}(n) = \frac{2a-1}{2a^4}n^2(1+o(1))$ for ${\cal E} :ax-ay = z$ and $ 3 \le a \le 7$. In these special cases our bound coincides with the conjectured optimal bounds for the more general equation $ax-ay = bz$ from \cite{ButlerCostelloGrahamConstellations} and \cite{thanatipanonda2016minimum}. 
Moreover, we obtain a nontrivial, but not tight, bound for $M_\E(n)$ for the non-regular equation $x + y = 3z$. 
\begin{theorem} \label{Theorem_x_y_3z}
Let ${\cal E}:x+y=3z$ and $n\geq R_2 ({\cal E})$. The following bound holds

$$M_{\cal E} (n)\geq \frac{167n^2}{28800}.$$
\end{theorem}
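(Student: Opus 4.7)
The plan is to apply the trace-SDP certificate method — as used for Theorem \ref{Theorem_ax_ay_z} and originally introduced by Parrilo, Robertson, and Saracino for $x+y=2z$ in \cite{ParriloRobertsonSaracinoMonochromaticAP} — to the equation $\E\colon x+y=3z$. Encoding a $2$-coloring as $\chi\colon [n]\to\{\pm 1\}$ and using the identity
$$\mathbf{1}[\chi(x)=\chi(y)=\chi(z)]=\tfrac14\bigl(1+\chi(x)\chi(y)+\chi(y)\chi(z)+\chi(x)\chi(z)\bigr),$$
one may rewrite $\mu_\chi(\E,n,2)=\tfrac{T_\E(n)}{4}+\tfrac14\langle M_n,X\rangle$, where $X_{ij}=\chi(i)\chi(j)$ and $M_n$ is the symmetric $n\times n$ integer matrix that records, for each unordered pair $\{i,j\}$, the number of ordered integer solutions $(x,y,z)$ to $x+y=3z$ in $[n]^3$ containing $\{i,j\}$ in one of the three positions $\{x,y\},\{y,z\},\{x,z\}$. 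Since $X\succeq 0$ with $X_{ii}=1$ for any genuine $\pm 1$ coloring, relaxing $X$ to the elliptope yields an SDP whose optimum is a valid lower bound for $M_\E(n)$.

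I would work on the dual side: exhibit a diagonal matrix $D_n$ and a positive semidefinite $Y_n$ with $\tfrac14 M_n = D_n + Y_n$, which then gives $M_\E(n)\ge \tfrac{T_\E(n)}{4}-\operatorname{tr}(D_n)$. Concretely, I would (i) solve the SDP numerically for a range of $n$ using a high-precision solver; (ii) inspect the dual optimizer for structure, paying special attention to symmetries and to residues modulo $3$ induced by the coefficient of $z$; (iii) extract a closed-form ansatz $(D_n,Y_n)$ whose entries are simple rational functions of $i$, $j$, and $n$; and (iv) verify $Y_n\succeq 0$ uniformly in $n$, either via a sum-of-squares decomposition expressing $\langle Y_n,X\rangle$ as a nonnegative combination of squares of linear functionals of $\chi$, or by peeling off a small dense block through Schur complement and displaying an explicit Cholesky for the banded remainder.

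The principal obstacle lies in steps (iii)–(iv): discovering the correct ansatz and certifying positive semidefiniteness. The denominator $28800=2^{7}\cdot 3^{2}\cdot 5^{2}$ together with the prime numerator $167$, combined with the authors' explicit remark that the bound is not tight, strongly suggests that $Y_n$ is not a single clean block but rather an optimal convex combination of several natural PSD components (for example, a low-rank ``balanced-coloring'' part together with a banded local-deviation correction), so the right ansatz must be read off carefully from the numerical data rather than guessed. Once $(D_n,Y_n)$ is in hand, the final identification $\tfrac{T_\E(n)}{4}-\operatorname{tr}(D_n)=\tfrac{167 n^2}{28800}+O(n)$ is a routine count, and the $O(n)$ slack is absorbed by checking finitely many small $n$ directly with the SDP solver up to the threshold beyond which the asymptotic inequality dominates.
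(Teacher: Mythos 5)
You have the right family of techniques in mind (the trace/PSD-certificate method of \cite{ParriloRobertsonSaracinoMonochromaticAP}), but your plan is missing the one idea that actually makes the paper's proof finite, and the steps you defer to ``inspect the numerics and extract an ansatz'' are precisely the hard part. The paper never works with the full $n\times n$ matrix $M_n$. Instead it first reduces to a quadratic form of \emph{fixed} dimension: $[n]$ is partitioned into $k=30$ equal subintervals, each further split by residue class modulo $3$, giving $90$ block variables $r_i^j,b_i^j$. Via Lemma \ref{Lemma_nonmono_counting_pairs} the non-monochromatic count is bounded by a quadratic form in these $90$ variables (overcounting on boundary squares contributes only $O(n)$), and after the substitution $r_i^j=\frac{1+x_i^j}{2}\frac{n}{3k}$ one applies Lemma \ref{Lemma_QuadraticFormPSDBound} to a single $90\times 90$ matrix $A$, with a numerically found diagonal $d$ satisfying $\sum d_i\le \frac{1377}{259200}$. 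Because the certificate has size independent of $n$, there is no need for a closed-form $n$-dependent ansatz or a uniform-in-$n$ PSD verification --- the two steps you correctly identify as the principal obstacle and do not resolve. In addition, the paper does not push everything through the SDP: the $\D_{xy}$ contribution (pairs with $x+y\equiv 0 \pmod 3$) is bounded separately by an elementary calculus argument, $\D_{xy}\le \frac{5n^2}{18}$, and only the $\D_{xz}=\D_{yz}$ part is handled by the quadratic form.

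A further concrete problem is that your route, even if completed, would not land on the stated constant. The value $\frac{167}{28800}=\frac{1}{90}-\frac{1377}{259200}$ is an artifact of the particular discretization ($k=30$ intervals, the crude $\frac{5}{18}$ bound on $\D_{xy}$, and the particular $d$ found by SeDuMi); the asymptotic optimum of the exact elliptope relaxation of $M_n$ is a different (larger) constant, so ``solve the SDP exactly for a range of $n$ and read off the pattern'' is aimed at the wrong target. Finally, note a sign slip: from $\tfrac14 M_n=D_n+Y_n$ with $Y_n\succeq 0$ and $X\succeq 0$, $X_{ii}=1$, you get $\tfrac14\langle M_n,X\rangle\ge \operatorname{tr}(D_n)$ and hence $M_\E(n)\ge \tfrac{T_\E(n)}{4}+\operatorname{tr}(D_n)$, not $-\operatorname{tr}(D_n)$; the usual convention is $\tfrac14 M_n+D_n\succeq 0$ with $D_n\succeq 0$ diagonal. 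That is cosmetic, but the absence of the finite-dimensional reduction is a genuine gap: as written, the proposal is a research plan whose open steps are harder than the theorem itself.
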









We conclude the paper in Section \ref{sectionmorethan2colors} with improvements to one of the most important equations in the history of the subject, the \emph{Schur equation} $x+y = z$ for more than 2 colors. 


We will denote the number $M_{x+y = z}(n,k)$ by $M_{\text{Schur}}(n,k)$.   For the case $k =2$, the exact value is known, and three independent proofs that $M_{\text{Schur}}(n,2) = \frac{n^2}{11} + O(n)$ were given by Robertson and Zeilberger \cite{RobertsonZeilbergerSchurTriples}, Schoen \cite{SchurTriplesSchoen1999}, and Datskovsky \cite{SchurTriplesDatskovsky2003}. Note that some authors count solutions differently, i.e., only solutions where $x\le y$, which leads to the apparent discrepancy of a factor of 2 in the calculation of $M_{\text{Schur}}(n,k)$. In this work we do not require $x \le y$, and thus the highest order term in $M_{\text{Schur}}(n,k)$ is $\frac{n^2}{11}$ rather than $\frac{n^2}{22}$. 

A natural next step is to determine values of $M_{\text{Schur}}(n,k)$ for $k > 2$. This question, along with generalizations of the problem for the equation $x+ay = z$, was explored by Thanatipanonda \cite{ThanatipanondaSchurTriplesProblem2009}, who used a greedy algorithm to find $k$-colorings giving few monochromatic Schur 
triples for $k = 3,4,5$. The colorings gave the following upper bounds for $M_{\text{Schur}}(n,k)$: 
\begin{align*}
    M_{\text{Schur}}(n,3) &\le \frac{47n^2}{3119} + O(n)\\ M_{\text{Schur}}(n,4) &\le \frac{69631222699293042329481527n^2} {33538492045698352404702657699
}+O(n), \\ M_{\text{Schur}}(n,5) &\le \frac{2n^2}{7610.0730}+O(n)
\end{align*}

The algorithm given in \cite{ThanatipanondaSchurTriplesProblem2009} becomes slow for $k \ge 6$ and gives rational numbers of large height (that are suppressed in the bound for $M_{\text{Schur}}(n,5)$). Our final contribution is new bounds for small values of $k$. 

\begin{theorem}\label{TheoremMuUpperBounds}
    The following improved bounds on $M_{\text{Schur}}(n,k)$ hold:

    $$M_{\emph{Schur}}(n,3) \le \frac{n^2}{67} + O(n), \hspace{20 pt } M_{\emph{Schur}}(n,4) \le \frac{n^2}{496} + O(n).$$
\end{theorem}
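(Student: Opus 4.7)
The plan is to construct, for each of $k=3$ and $k=4$, an explicit $k$-coloring of $[1,n]$ given by a partition into intervals (blocks) of proportional sizes with respect to $n$, and then to count its monochromatic Schur triples asymptotically. A block $k$-coloring is specified by real numbers $0 = b_0 < b_1 < \cdots < b_m = 1$ and a color assignment $\chi:\{1,\ldots,m\}\to\{1,\ldots,k\}$, where the block $I_j = (b_{j-1}n, b_j n]\cap\Z$ receives color $\chi(j)$.

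First, I would use the integer programming framework of Section~\ref{methodsofIPSDP} to compute provably optimal $3$- and $4$-colorings of $[1,n]$ for moderate $n$. Several such colorings are reported in Appendix~\ref{amyIPexperiments}. From these one extracts a candidate block pattern—the number of blocks $m$, the color assignment $\chi$, and approximate boundaries $b_j$—extrapolating finite data to a coloring defined for all $n$. The authors' earlier remark that some optimal colorings have unpredictable shape means that one may need to try several templates (numbers of blocks, patterns of color reuse) before finding one whose optimized parameters beat the Thanatipanonda bound $47/3119$ (resp.\ the corresponding $k=4$ bound).

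Next, for a fixed combinatorial block pattern parametrized by $b_1,\ldots,b_{m-1}$, I would write the number of monochromatic solutions to $x+y=z$ as a sum, over triples of blocks $(I_i,I_j,I_\ell)$ with $\chi(i)=\chi(j)=\chi(\ell)$, of the number of integer solutions $(x,y,z)\in I_i\times I_j\times I_\ell$ to $x+y=z$. Each such lattice-point count equals $n^2$ times the area of an explicit planar triangle plus an $O(n)$ error coming from boundary discrepancies. Summing over all same-color triples of blocks yields a piecewise-quadratic function $F(b_1,\ldots,b_{m-1})$ whose value equals $M_\chi/n^2 + O(1/n)$ for the coloring $\chi$. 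I would then minimize $F$ over the open simplex $0 < b_1 < \cdots < b_{m-1} < 1$; within each piece $F$ is a smooth quadratic, so the minimum is obtained by setting $\nabla F = 0$ and solving a linear system in the $b_j$'s, and the global minimum is obtained by comparing across pieces. The verification step is to check that the resulting minimum value equals exactly $1/67$ for $k=3$ and $1/496$ for $k=4$.

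The principal obstacle is the first step: the IP output does not always immediately suggest a clean block template, and even when it does there may be several inequivalent near-optimal patterns. Success depends on selecting the correct combinatorial template from computational experiments—in particular choosing the number of blocks and whether colors repeat—since once this is fixed, the optimization in the second step becomes a mechanical if tedious calculation. Additional care is needed to confirm that lattice-point discrepancies on the hyperplane $x+y=z$ and on the block boundaries contribute only $O(n)$, so that the leading $n^2$-coefficient is truly the optimized value of $F$.
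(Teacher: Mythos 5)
Your overall strategy---exhibit an explicit block $k$-coloring with boundaries proportional to $n$, count monochromatic Schur triples block-triple by block-triple as $n^2$ times a sum of areas plus $O(n)$ boundary error, and optimize the boundary positions over each combinatorial cell of the resulting piecewise quadratic---is exactly the mechanism behind the paper's proof, and your remarks about controlling lattice-point discrepancies are correct and unproblematic. The verification step you describe (plug in the boundaries, check the leading coefficient is $1/67$, resp.\ $1/496$) is precisely what the paper's proof amounts to once the colorings are written down.

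The genuine gap is that your proposal never actually produces the colorings, and the step you defer to ``trying several templates extracted from IP output'' is the step the paper resolves by a structural choice rather than by search. The paper does not extract the block pattern from integer-programming experiments (those are only run for $2$-colorings in Section \ref{theupperbounds}); instead it fixes in advance the \emph{greedy-palindromic} pattern $P_k$ defined by $P_1 = 0$ and $P_k = P_{k-1}(k-1)P_{k-1}$, which has $2^k-1$ blocks with the $j$-th block colored $\nu_2(j)$. This template is motivated by classical Schur-number lower-bound constructions, not by data. With the template fixed, the paper minimizes a specific quadratic $p_k = \sum_i C_i(x_i - s_{a_i})^2$ in the block lengths subject to $\sum x_i = n$, and for $k=3,4$ this polynomial equals the total monochromatic count, yielding the $7$-block coloring $C_{P_3}(10,14,2,28,1,11,1)$ and the $15$-block coloring $C_{P_4}(28,38,5,75,2,30,2,182,1,29,1,72,1,29,1)$ whose counts are $n^2/67$ and $n^2/496$. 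Without committing to a template (in particular, without knowing that $7$ and $15$ blocks in this palindromic arrangement suffice to beat Thanatipanonda's bounds), your optimization has no well-defined finite search space, so as written the proposal establishes the method but not the stated constants. A secondary caution: the paper notes that for $k\ge 5$ the polynomial $p_k$ no longer counts all monochromatic solutions (some triples avoid the first block of their color entirely), so if you carry out your more general area computation you must sum over \emph{all} same-color block triples, as you propose, rather than adopting the paper's shortcut objective.
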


\section{Upper Bounds on $M_{\cal E}(n)$ for 3-term linear equations} \label{theupperbounds}

In this section, we show upper bounds of $M_{\cal E}(n)$ for equations ${\cal E}$ of the forms $ax+ay=z$, $ax-ay=z$, $x+y=az$, and $ax+by=z$ when $gcd(a,b)=1$. We used integer programming to obtain the best colorings for fixed values of $n$. This helped us, in some cases, give a general coloring for all $n$. In the cases where our experiments did not show any predictable patterns in the colorings, we proceeded to design an appropriate coloring using theoretical observations. 

In our arguments we had three main types of colorings. One of them is the \emph{block coloring} where intervals of at least three integers receive the same color (Figure \ref{coloring_block_x+y=az}). We also used a coloring where all integers that are \emph{multiples} of a certain number $a$ receive the same color and the rest of the integers receive a different color (Figure \ref{coloring_ax+ay=z_ax-ay=z}). Finally, we used a coloring that assigns a certain color based on the congruence class$\pmod a$ of each integer. For example, coloring $\chi_1$ in Figure \ref{coloring_x+y=az} assigns red to all integers $i\equiv 0,2,5\pmod6$ in $[n]$ and the rest of the integers are blue. Some of our experiments show a mix of these types of colorings. We will write $k$-colorings with colors $0,1,\dots,k-1$ and use $i^m$ to denote a string of $m$ consecutive integers of color $i$.

In some parts we will abbreviate by $G_{\cal E}(n,a)$ the value of an upper bound of $M_{\cal E}(n)$. Similarly, we will use $G_{\cal E}(n,a,b)$ to abbreviate the value of an upper bound of $M_{\cal E}(n)$ when ${\cal E}$ is a three-term linear equation that depends on positive integers $a$ and $b$.\\

\begin{reptheorem} {upperbax+ay}
Let $a$ be a positive integer and ${\cal E}$ be the equation $ax+ay=z$. The following holds 
$$M_{\cal E}(n) \leq  \left \lfloor\frac{n^2}{2a^4}+\frac{n}{2a^2}\right\rfloor,$$ but the bound is not tight.
\end{reptheorem}

\begin{proof}[Proof of Theorem \ref{upperbax+ay}]

    Let ${\cal E}:ax+ay=z$ and $a\geq 2$. We prove that $\mu_\chi({\cal E},n,2)= \frac{n^2}{2a^4}$ where $\chi$ is the $2$-coloring of $[n]$ that assigns integers $i\equiv 0\pmod a$ the color red and everything else the color blue. See Figure \ref{coloring_ax+ay=z_ax-ay=z}.
    We count the monochromatic solutions by looking at each integer $z^{*}$ in $[n]$ that can take the value of $z$ in $ax+ay=z$ and according to its color we count the pairs of integers $(x^{*},y^{*})$ of the same color that satisfy $ax^{*}+ay^{*}=z^{*}$.

    Note that any such $z^{*}$ must be a multiple of $a$. Therefore, there cannot be any blue solutions as there are no multiples of $a$ in blue. It suffices to look only at all integer multiples of $a$. Given a red $z^{*}$, and a pair $x^* = ak_1$ and $y^* = ak_2$ that satisfy $ax^{*}+ay^{*}=z^{*}$, notice that $z^{*}$ will be forced to be a multiple of $a^2$, namely

    $$ax^{*}+ay^{*}= a(ak_1) + a(ak_2) = a^2 (k_1 + k_2 ) = z^{*},$$

    which in turn implies that $\frac{z^{*}}{a}$ is a multiple of $a$,

    $$x^{*}+y^{*}= \frac{z^{*}}{a}.$$

    Therefore it is equivalent to find pairs $(x^*,y^*)$ which are multiples of $a$ that sum to $\frac{z^*}{a}$, which is also a multiple of $a$. Note that $ \frac{z^{*}}{a} \leq \frac{n}{a}$, which implies that $x^*, y^* \leq \frac{n}{a}$. 
    We can now take each multiple of $a$, $\frac{z^{*}}{a}$, and count all pairs of multiples of $a$ that sum to it, which is simply $\frac{z^{*}}{a^2}$. 

    $$\sum_{z^* = a^2 k,\hspace{2pt} 1\leq k \leq \frac{n}{a}} \frac{z^*}{a^2} = \sum_{i=1}^{\lfloor\frac{n}{a^2}\rfloor} i \leq \left\lfloor\frac{n^2}{2a^4} + \frac{n}{2a^2}\right\rfloor.$$

Hence, $M_{\cal E}(n) \leq \left\lfloor \frac{n^2}{2a^4}+\frac{n}{2a^2}\right\rfloor$.

 \begin{figure}
    \centering
\begin{tikzpicture}

\draw (0,0.1)--(0,-0.1) node[black,below]{0};
\draw [blue, ultra thick] (0,0)--(1,0);
\draw [blue, ultra thick] (1,0)--(2,0);
\draw [blue, ultra thick] (2,0)--(3,0);
\filldraw [black] (4.3,-.4) circle (1pt);
\filldraw [black] (4.5,-.4) circle (1pt);
\filldraw [black] (4.7,-.4) circle (1pt);
\draw [blue, ultra thick] (3,0)--(15,0);
\draw [red, ultra thick] (1,0.1)--(1,-0.1) node[black,below]{$a$};
\draw [red, ultra thick] (2,0.1) --  (2,-0.1)node[black,below]{$2a$};
\draw [red, ultra thick] (3,0.1) --  (3,-0.1)node[black,below]{$3a$};
\draw [red, ultra thick] (6,0.1) --  (6,-0.1)node[black,below]{$ak$};
\draw [red, ultra thick] (15,0.1) --  (15,-0.1)node[black,below]{$\lfloor\frac{n}{a}\rfloor a$};
\filldraw [black] (7,-.4) circle (1pt);
\filldraw [black] (7.2,-.4) circle (1pt);
\filldraw [black] (7.4,-.4) circle (1pt);

\end{tikzpicture}
\caption{Coloring $\chi$ used in Theorems \ref{upperbax+ay} and \ref{upperbax-ay} where all multiples of $a$ are red and the rest is blue.}
\label{coloring_ax+ay=z_ax-ay=z}
 \end{figure}
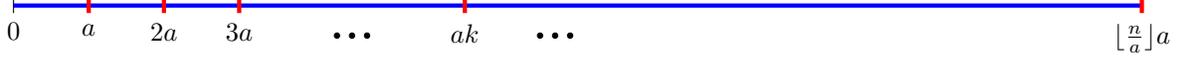

To prove that this bound is not tight, we conducted experiments using integer programming where we can appreciate a few discrepancies when $a=2,3,4$. We used values of $n$ greater than the corresponding Rado number, which is $R_2 (ax+ay=z)=4a^3+a$, \cite{HarborthMaasberg}. In Table \ref{table:4x+4y=z}, we list the results for $a=4$ and leave the rest in Appendix \ref{amyIPexperiments}. Let ${\cal E}: ax+ay=z$ and $G_{\cal E}(n,a) = \left\lfloor \frac{n^2}{2a^4}-\frac{n}{2a^2}\right\rfloor$.

\begin{table}[H]
\centering
\begin{tabular}{|p{0.1\textwidth}|p{0.15\textwidth}|p{0.15\textwidth}|p{0.15\textwidth}|p{0.15\textwidth}|}
           \hline
           $n$&    260&  300& 350& 400\\
           \hline
           $M_{\cal E}(n)$&   $1$&  $1$& $1$& $2$\\
           \hline
           $G_{\cal E}(n,4)$ & $132$ & $175$ & $239$& $312$ \\
           \hline
          $T_{\cal E}(n)$ &  $1056$&  $1406$&  $1892$&  $2500$\\
           \hline
           \textbf{Optimal Coloring}&  $0^{8}1^{56}0^{196}$&  $0^{10}1^{64}010^{3}$ $10^{220}$&  $1^{11}0^{84}(1000)^{63}10^{2}$& $1^{14}0^{98}(1000)^{72}1$\\
           \hline

\end{tabular}
\caption{Results using integer programmingfor equation ${\cal E} : 4x+4y=z$.}
\label{table:4x+4y=z}
\end{table}

This concludes the proof.
\end{proof}

\begin{reptheorem} {upperbax-ay}

Let $a$ be a positive integer and ${\cal E}$ be the equation $ax-ay=z$. 
The following holds $$M_{\cal E}(n) \leq  \left\lfloor\frac{n^2}{a^3}-\frac{n^2}{2a^4}-\frac{n}{2a^2}\right\rfloor,$$

but the bound is not tight. 
   
\end{reptheorem}

\begin{proof}[Proof of Theorem \ref{upperbax-ay}]

    Let ${\cal E}:ax-ay=z$ and $a\geq 2$. We prove that $\mu_\chi({\cal E},n,2)= \frac{n^2}{a^3}-\frac{n^2}{2a^4}-\frac{n}{2a^2}$ where $\chi$ is the $2$-coloring of $[n]$ that assigns integers $i\equiv 0\pmod a$ the color red and everything else the color blue. See Figure \ref{coloring_ax+ay=z_ax-ay=z}. We count the monochromatic solutions by looking at each integer $z^{*}$ in $[n]$ that can take the value of $z$ in $ax-ay=z$ and according to its color we count the pairs of integers $(x^{*},y^{*})$ of the same color that satisfy $ax^{*}-ay^{*}=z^{*}$. Note that any such $z^{*}$ must be a multiple of $a$. Therefore, there cannot be any blue solutions as there are no multiples of $a$ in blue. It suffices to look only at all integers multiples of $a$.

    Given a value of $z^{*}$, and a pair $x^* = ak_1$ and $y^* = ak_2$ that satisfy $ax^{*}-ay^{*}=z^{*}$, notice that $z^{*}$ will be forced to be a multiple of $a^2$, namely

    $$ax^{*}-ay^{*}= a(ak_1) - a(ak_2) = a^2 (k_1 - k_2 ) = z^{*},$$

    which in turn implies that $\frac{z^{*}}{a}$ is a multiple of $a$,

    $$x^{*}-y^{*}= \frac{z^{*}}{a}.$$

    We take each value $z^*$ and count all pairs $(x^*,y^*)$ which are multiples of $a$ whose difference is $\frac{z^*}{a}$. All possible values of $\frac{z^*}{a}$ lie in the set $\{a,2a, \cdots, a\cdot\frac{n}{a^2} \}$ because $\frac{z^*}{a} \leq \frac{n}{a}$. 
    
    Note that there are $\frac{a(\frac{n}{a})-a}{a}$ ways to pick two multiples of $a$ whose difference is $a$. There are $\frac{a(\frac{n}{a})-2a}{a}$ ways to pick two multiples of $a$ whose difference is $2a$. Continuing this way, we finish by counting $\frac{a(\frac{n}{a})-\frac{n}{a}}{a}$ ways to pick two multiples of $a$ whose difference is $\frac{n}{a}$. Putting together these values we get

    $$\sum_{i=1}^{\frac{n}{a^2}}\left(\frac{n}{a}-i\right) \leq \left\lfloor\frac{n^2}{a^3}-\frac{n^2}{2a^4}-\frac{n}{2a^2}\right\rfloor.$$

    Hence, $M_{\cal E}(n)\leq \left\lfloor\frac{n^2}{a^3}-\frac{n^2}{2a^4}-\frac{n}{2a^2}\right\rfloor$ holds.

    To prove that this bound is not tight, we carried out experiments using integer programming where we noticed discrepancies when $a=3,4$. We used values of $n$ greater than the corresponding Rado number, which is $R_2 (ax-ay=z)= a^2$ (see \cite{HarborthMaasberg}). 
    See Table \ref{table:3x-3y=z} for a full comparison. Experiments for the case $a=4$ can be found in Table \ref{table:4x-4y=z} in Appendix \ref{amyIPexperiments}. Let $G_{\cal E}(n,a)=\left\lfloor\frac{n^2}{a^3}-\frac{n^2}{2a^4}-\frac{n}{2a^2}\right\rfloor$.


\begin{table}[H]
\centering
\begin{tabular}
{|p{0.1\textwidth}|p{0.15\textwidth}|p{0.15\textwidth}|p{0.15\textwidth}|p{0.15\textwidth}|}
           \hline
           $n$&    25&  50& 75& 100\\
           \hline
           $M_{\cal E}(n)$ &   $13$ &  $65$& $164$& $297$\\
           \hline
           $G_{\cal E}(n,3)$ & $17$ & $74$ & $169$& $303$ \\
           \hline
          $T_{\cal E}(n)$ &  $164$&  $664$&  $1550$&  $2739$\\
           \hline
           \multicolumn{5}{|c|}{ \textbf{Optimal Coloring:} for integers $i\leq n$, color $0$ if $i\equiv 0 \pmod 3$, and $1$ otherwise.}\\
           \hline
      
\end{tabular}
\caption{Results for ${\cal E}: 3x-3y=z$.}
\label{table:3x-3y=z}
\end{table}

\end{proof}

The same experiments when ${\cal E}: 5x-5y=z$ show no discrepancies with the corresponding values given by the upper bound in Theorem \ref{upperbax-ay} when $n=25,50,75$ and $100$. See Table \ref{table:5x-5y=z} for full comparison. In Section \ref{jacktracesdp}, we determine the exact value for $M_{\cal E}(n)$ when $3\leq a \leq 7$ and verify our results.


\begin{table}[H]
\centering
\begin{tabular}{|p{0.1\textwidth}|p{0.15\textwidth}|p{0.15\textwidth}|p{0.15\textwidth}|p{0.15\textwidth}|}
           \hline
           $n$&    $25$&  $50$& $75$& $100$\\
           \hline
           $M_{\cal E}(n)$&   $4$&  $17$& $39$& $70$\\
           \hline
           $G_{\cal E}(n,5)$&   $4$ &  $17$& $39$& $70$\\
           \hline
          $T_{\cal E}(n)$&  $110$&  $445$&  $1005$&  $1790$               \\
           \hline
           \multicolumn{5}{|c|}{ \textbf{Optimal Coloring:} for integers $i\leq n$, color $0$ if $i\equiv 0 \pmod 5$, and $1$ otherwise.}\\
           \hline
\end{tabular}
\caption{Results for ${\cal E}: 5x-5y=z$.}
\label{table:5x-5y=z}
\end{table}


 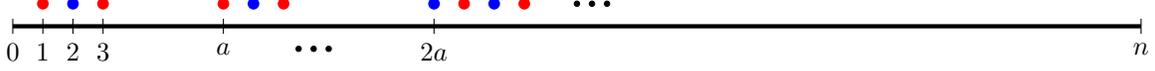
\begin{figure}
    \centering
\begin{tikzpicture}

 \draw (0,0.1)--(0,-0.1) node[black,below]{$0$};
 \draw [black, ultra thick] (0,0)--(15,0);
 \draw (15,0.1)--(15,-0.1) node[black,below]{$n$};

\draw (0.4, 0.1)--(0.4,-0.1) node[black,below]{$1$};
\draw (0.8, 0.1)--(0.8,-0.1) node[black,below]{$2$};
\draw (1.2, 0.1)--(1.2,-0.1) node[black,below]{$3$};
\draw (2.8, 0.1)--(2.8,-0.1) node[black,below]{$a$};
\draw (5.6, 0.1)--(5.6,-0.1) node[black,below]{$2a$};

\filldraw [black] (3.8,-.3) circle (1pt);
\filldraw [black] (4,-.3) circle (1pt);
\filldraw [black] (4.2,-.3) circle (1pt);

\filldraw [black] (7.5,.3) circle (1pt);
\filldraw [black] (7.7,.3) circle (1pt);
\filldraw [black] (7.9,.3) circle (1pt);

 \filldraw [red] (0.4,.3) circle (2pt);
  \filldraw [blue] (0.8,.3) circle (2pt);
 \filldraw [red] (1.2,.3) circle (2pt);
 
 \filldraw [red] (2.8,.3) circle (2pt);
  \filldraw [blue] (3.2,.3) circle (2pt);
 \filldraw [red] (3.6,.3) circle (2pt);
 
  \filldraw [blue] (5.6,.3) circle (2pt);
 \filldraw [red] (6.0,.3) circle (2pt);
 \filldraw [blue] (6.4,.3) circle (2pt);
 \filldraw [red] (6.8,.3) circle (2pt);

\end{tikzpicture}
\caption{Coloring $\chi_0$, where $a$ is odd, and integers that are $1,3,5,\dots,a-2 \pmod a$ or $a \pmod{2a}$ are red, and integers that are $2,4,6\dots,a-1 \pmod a$ or $0 \pmod{2a}$ are blue.}
\label{coloring_x+y=az}
 \end{figure}

To prove Theorem \ref{upperbx+y=az} we use Lemma \ref{Lemma_nonmono_counting_pairs}. Given a $k$-coloring $\chi$ of $[n]$, we denote by $\nu_\chi(\E,n,k)$ the number of non-monochromatic solutions to $\E$. Note that $\mu_\chi(\E,n,k) +\nu_\chi(\E,n,k) = T_\E(n)$. 
\begin{lemma}\label{Lemma_nonmono_counting_pairs}
    Let $\E$ be an equation of the form $ax + by = cz$. For any $2$-coloring $\chi$ of $[n]$, the number  $\nu_\chi(\E,n,2)$ of non-monochromatic solutions to $\E$ over $[n]$  is precisely $\nu_\chi(\E,n,2) = \frac{1}{2}(\D_{xy}+\D_{xz}+\D_{yz})$, where $$\D_{vv'} = |\{(i,j) : \chi(i) \neq \chi(j), \text{ and there is a solution to } \E \text{ with } v = i, v' = j\}|.$$ 
\end{lemma}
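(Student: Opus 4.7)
The plan is a direct double-counting argument, classifying each solution by its color-type. I would first rewrite each $\D_{vv'}$ as a sum of indicators over solution triples. The key enabling observation is that in a three-variable equation $ax+by=cz$ (with all coefficients nonzero), fixing the values of any two variables uniquely determines the value of the third (if it lies in $[n]$). Consequently, an ordered pair $(i,j)$ with designated positions $(v,v')$ corresponds to at most one solution $(x_0,y_0,z_0)\in[n]^3$, and so
$$\D_{vv'} \;=\; \#\bigl\{(x_0,y_0,z_0)\in[n]^3 : ax_0+by_0=cz_0,\ \chi(v\text{-value})\neq\chi(v'\text{-value})\bigr\}.$$

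Next, I would classify each solution $(x_0,y_0,z_0)$ by the color pattern of $(\chi(x_0),\chi(y_0),\chi(z_0))$. Since we are using two colors, any non-monochromatic solution has exactly two of the three variables sharing a color and the remaining variable of the opposite color. A short case analysis on which variable is the odd one out shows that in each of the three cases, exactly two of the three pairs $(x_0,y_0)$, $(x_0,z_0)$, $(y_0,z_0)$ are bichromatic and exactly one is monochromatic. Therefore each non-monochromatic solution contributes exactly $2$ to the sum $\D_{xy}+\D_{xz}+\D_{yz}$, while each monochromatic solution contributes $0$. Summing over all solutions gives $\D_{xy}+\D_{xz}+\D_{yz}=2\nu_\chi(\E,n,2)$, and the lemma follows.

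Since the argument is a clean double count, there is no significant obstacle; the only points requiring care are (i) confirming that fixing two coordinates of a solution pins down the third so that $\D_{vv'}$ really does count solutions (not merely pairs), and (ii) handling potentially degenerate triples in which two coordinate-values coincide (e.g.\ $x_0=y_0$). The latter poses no issue: when two variables have the same value, they automatically share a color, so the relevant pair is monochromatic and simply does not get counted in the corresponding $\D_{vv'}$, leaving the per-solution contribution of $2$ (or $0$) intact.
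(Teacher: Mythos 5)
Your proof is correct and follows essentially the same route as the paper's, whose entire argument is the observation that every non-monochromatic solution has exactly two bichromatic pairs among $(x,y)$, $(x,z)$, $(y,z)$. Your additional care in checking that each pair $(i,j)$ in $\D_{vv'}$ corresponds to a unique solution (since fixing two variables determines the third) and in handling coincident values is a welcome tightening of a step the paper leaves implicit.
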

\begin{proof}
    For any non-monochromatic solution $(x,y,z)$ to $\E$, precisely two of the pairs $(x,y), (x,z)$, and $(y,z)$ are non-monochromatic. 
\end{proof}

\begin{reptheorem} {upperbx+y=az}
Let ${\cal E}$ be the equation $x+y=az$. The following holds.

\[ M_{\cal E}(n)\leq \begin{cases}
     \dfrac{n^2}{4a^2} +O(n) & a=3,5, \\
     \\
      \dfrac{8(2a - 1)n^2}{a^4(4 + a)} + O(n)& a=4, a\ge 6,\\
   \end{cases}
\]


but this bound is not tight.
\end{reptheorem}

\begin{proof}[Proof of Theorem \ref{upperbx+y=az}]


  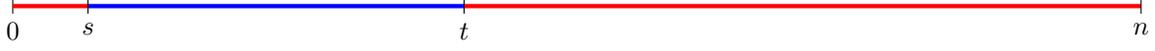
\begin{figure}
    \centering
\begin{tikzpicture}

\draw (0,0.1)--(0,-0.1) node[black,below]{$0$};
\draw [red, ultra thick] (0,0)--(1,0);
\draw [blue, ultra thick] (1,0)--(6,0);
\draw [red, ultra thick] (6,0)--(15,0);

\draw (1,0.1)--(1,-0.1) node[black,below]{$s$};
\draw (6,0.1)--(6,-0.1) node[black,below]{$t$};
\draw (15,0.1)--(15,-0.1) node[black,below]{$n$};

\end{tikzpicture}
\caption{Coloring $\chi$, where the interval $[1,s] \cup [t,n]$ is red and $(s,t)$ blue, where $s = \frac{ 4n(a + 1)}{a^2(4 + a)}, \quad t = \frac{2n}{a}.$}
\label{coloring_block_x+y=az}
 \end{figure}

 Let $a$ be odd and consider the coloring $\chi_0$ that colors integers that are $1,3,5,\dots,a-2 \pmod a$ or $a \pmod{2a}$ red, and integers that are $2,4,6\dots,a-1 \pmod a$ or $0 \pmod{2a}$ blue. Observe that if $(x,y,z)$ is a monochromatic solution to $\E$, then we must have $x,y \equiv 0 \pmod{a}$. Then $z$ is even, so $z \equiv 2,4\dots,2a \pmod{2a}$. Since $a \ge 3$, we have $ 1 \le z = (x+y)/a \le n$ for all $x,y \equiv 0 \pmod{a}$. So for a fixed $x_0 \equiv 0 \pmod a$, there is a solution $(x_0,y,(x_0+y)/a)$ for all $y \in [n]$ with $ y \equiv 0 \pmod a$, and the values $z = (x_0+y)/a$ are equidistributed (up to a constant difference) $\pmod {2a}$.

Therefore if $x \equiv a \pmod{2a}$ is red, there are $\frac{a-1}{2a}\frac{n}{2a} + O(1)$ values $y$ that yield a monochromatic solution. Since there are $\frac{n}{2a} + O(1)$ such values $x$, we have $\frac{(a-1)n^2}{8a^3}+O(n)$ red monochromatic solutions. Similarly, if $x \equiv a \pmod{2a}$ is blue, there are $\frac{a+1}{2a}\frac{n}{2a} + O(1)$ values $y$ that yield a monochromatic solution and we obtain $\frac{(a+1)n^2}{8a^3} + O(n)$ blue monochromatic solutions. Then there are $\frac{n^2}{4a^2} + O(n)$ monochromatic solutions in total. 
 \indent
 Next consider the following coloring $\chi$, where we color the interval $[1,s] \cup [t,n]$ red and $(s,t)$ blue, where $$s = \frac{ 4n(a + 1)}{a^2(4 + a)}, \quad t = \frac{2n}{a}.$$ To count the number of monochromatic solutions, we will use Lemma \ref{Lemma_nonmono_counting_pairs}.  Since $a \ge 3$, we have that $(x,y)$ is a solution to $\E$ if and only if $x + y \equiv 0 \pmod{a}$. For $0\le i \le a-1$, let $r_i$ and $b_i$ denote the number of red and blue integers in $[n]$ congruent to $i \pmod{a}$, respectively, and set $r_a = r_0$ and $b_a = b_0$. Let $R$ and $B$ denote the total number of red and blue integers, respectively. Then we obtain $$\D_{xy} = 2\sum_{i= 0}^{a-1} r_i b_{a-i} \ge \frac{2RB}{a} - O(n) = \frac{2(n-t+s)(t-s)}{a} - O(n)$$ since $r_i \ge R/a -O(1)$ and $b_i \ge B/a - O(1)$ for all $i$. Now to find $\D_{xz} = \D_{yz}$, observe that all such pairs $(x,z)$ that appear in a solution to $\E$ lie in the following region: 
 \begin{center}
     \begin{tikzpicture}
  \draw[fill=gray!50!white] plot[smooth,samples=100,domain=0:8] (\x,1/4*\x) -- 
plot[smooth,samples=100,domain=8:0] (\x,1/4*\x + 2);
\draw (0,0)--(8,0) node[right]{$x$};
 \draw (0,0)--(0,8) node[above]{$z$};
 \draw (8,2pt)--(8,-2pt) node[below] {$n$};
\draw (1.25,2pt)--(1.25,-2pt) node[below] {$s$};
 \draw (2pt,1.25)--(-2pt,1.25) node[left] {$s$};
 \draw (2pt,8)--(-2pt,8) node[left] {$n$};
 \draw (2pt,2)--(-2pt,2) node[left] {$\frac{n}{a}$};
\draw (2pt,4)--(-2pt,4) node[left] {$t = \frac{2n}{a}$};
\draw (4,2pt)--(4,-2pt) node[below] {$t$};
\draw [thick,dotted] (2pt,1.25)--(5,1.25);
\draw [thick,dotted] (1,1)--(1,1);
\draw [thick,dotted] (1.25,0.3)--(1.25,2.3);
\draw [thick,dotted] (4,3) -- (4,1);
\draw [red,thick] (0,0)--(0,1.25); 
\draw [red,thick] (1.25,0)--(0,0); 
\draw [blue,thick] (1.25,0)--(4,0); 
\draw [blue,thick] (0,1.25)--(0,4); 
\draw [red,thick] (0,4)--(0,8); 
\draw [red,thick] (8,0)--(4,0); 
\draw node at (0.62,1.6) {$A_{12}$};
\draw node at (2.5,0.9) {$A_{21}$};
\draw node at (6,2.5) {$A_{32}$};
\end{tikzpicture}
\end{center} 
Let $I_1 = [0,s], I_2 = [s,t], I_3 = [t,n]$, and let $A_{ij}$ denote the area of $I_i \times I_j$ that lies in the shaded paralellogram above (note that the scale is not accurate, but we do have $as \ge t$). Then the number of non-monochromatic pairs $\D_{xz}$ is $A_{12} + A_{21} + A_{23} + A_{32} - O(n)$, the linear term accounting for the boundaries of the rectangles. A straightforward calculation gives \begin{align*}
    A_{12} &= \frac{1}{2} s\left(\frac{n}{a}-s+\frac{n}{a}+\frac{s}{a}-s\right), \\
    A_{21} &= \frac{1}{2} \left(s-\frac{s}{a}+s- \frac{t}{a}\right)(t-s), \\
    A_{23} &= 0, \\
    A_{32} & = \frac{1}{2}(as-t)\left(\frac{n}{a}+\frac{t}{a}-s+\frac{n}{a}\right) + (n-as)\frac{n}{a}. 
\end{align*}
Then we have $$\mu_\chi(\E,n,2) = \frac{n^2}{a} - \frac{1}{2}(\D_{xy} + 2\D_{xz}) \le \frac{8(2a - 1)n^2}{a^4(4 + a)} + O(n).$$

For equation $x+y=az$, we conducted experiments for $n=25,50,75,100$ when $3\leq a \leq7$. Let ${\cal E}: x+y=az$ and let $G_{\cal E}(n,a)= \lfloor\frac{8(2a - 1)n^2}{a^4(4 + a)}\rfloor$. See Table \ref{table:x+y=6z} for a full comparison when $a=6$ and see Appendix \ref{amyIPexperiments} for results when $a=3$. Notice there are a few differences that show that Theorem \ref{upperbx+y=az} is not tight.


\begin{table}[H]
\centering
\begin{tabular}{|p{0.1\textwidth}|p{0.15\textwidth}|p{0.15\textwidth}|p{0.15\textwidth}|p{0.15\textwidth}|}
           \hline
           $n$&    $25$&  $50$& $75$& $100$\\
           \hline
           $M_{\cal E}(n)$&   $3$&  $11$& $27$& $49$\\
           \hline
           $G_{\cal E}(n,6)$&   $4$ &  $16$& $38$& $67$\\
           \hline
          $T_{\cal E}(n)$&  $104$&  $416$&  $937$&  $1667$               \\
           \hline
           \textbf{Optimal Coloring}&  $0^{2}1^{3}0^41^2$ $0^{4}1^30^{2}1^40$&  $10^{3}1^{12}0^{24}$&  $0^{2}1^{4}0^{18}1^{51}$& $1^{2}0^{6}1^{24}0^{68}$\\
           \hline
      
\end{tabular}
\caption{Results for ${\cal E}: x+y=6z$.}
\label{table:x+y=6z}
\end{table}

\end{proof}

For equation the $x+y=2z$, better known as van der Waerden's equation, we conducted experiments when $a=15,20,26,30$ and obtained the best colorings using integer programming. These colorings seem to follow a very particular block pattern that is not easily predictable. Note that these optimal colorings for small $n$ are close to the upper bounds given in \cite{ParriloRobertsonSaracinoMonochromaticAP}. 


\begin{table}[H]
\centering
\begin{tabular}{|p{0.1\textwidth}|p{0.15\textwidth}|p{0.15\textwidth}|p{0.15\textwidth}|p{0.15\textwidth}|}
           \hline
           $n$&    $15$&  $20$& $26$& $30$\\
           \hline
           $M_{\cal E}(n)$&   $25$&  $44$& $74$& $98$\\
           \hline
          $T_{\cal E}(n)$&  $113$&  $200$&  $338$&  $450$               \\
           \hline
           \textbf{Optimal Coloring}&  $0101^20^3$ $1^3010^2$&  $1^20^21^20^4$ $1^40^21^20^2$&  $1^20^21^30^5$ $1^60^31^20^3$&   $0^41^20^31^6$ $0^61^30^2101^2$\\
           \hline
      
\end{tabular}
\caption{Results for ${\cal E}: x+y=2z$.}
\label{table:x+y=2z}
\end{table}

We explore the more general equation $ax+by=z$ and give some general upper bounds.

\begin{reptheorem}{upperbax+by}
     Let $a$ and $b$ be positive integers with $a<b$ and $gcd(a,b)=1$. Let ${\cal E}$ be an equation of the form $ax+by=z$ and let $n\geq R_2 ({\cal E})$. 
    The following holds

    $$M_{\cal E}(n)\leq \sum_{z=a+b}^{\left\lfloor\frac{n}{a(a+b)}\right\rfloor} \left( \frac{z}{ab} - \left\{ \frac{b^{-1}z}{a} \right\} - \left\{\frac{a^{-1}z}{b} \right\} +1 \right),$$

where $\{x\}=x-\lfloor x \rfloor$, $b^{-1}$ is any integer such that $b^{-1}b\equiv 1\pmod a$ and $a^{-1}$ is any integer such that $a^{-1}a\equiv 1 \pmod b$. This bound is not tight.
\end{reptheorem}



    

   \begin{proof}[Proof of Theorem \ref{upperbax+by}]

       Let ${\cal E}: ax+by=z$ where $gcd(a,b)=1$ and $a,b\geq 2$ are distinct integers. Let $\chi$ be a coloring of $[n]$ such that $\chi(i)$ is red if $i\in [1, \lfloor\frac{n}{a(a+b)}\rfloor]\cup [\lfloor \frac{n}{a}\rfloor +1,n]$ and $\chi(i)$ is blue otherwise. See Figure \ref{coloring_ax+by}.
       
       We prove that $$\mu_{\chi}({\cal E},n,2)= \sum_{z=a+b}^{\lfloor\frac{n}{a(a+b)}\rfloor} \left( \frac{z}{ab} - \left\{ \frac{b^{-1}z}{a} \right\} - \left\{\frac{a^{-1}z}{b} \right\} +1 \right),$$ where $\{x\}=x-\lfloor x \rfloor$, $b^{-1}$ is an integer such that $b^{-1}b\equiv 1\pmod a$ and $a^{-1}$ is an integer such that $a^{-1}a\equiv 1 \pmod b$.
       
       Note that any blue choice of $x^*$ or $y^*$ must satisfy 

       $$\left\lfloor\frac{n}{a(a+b)}\right\rfloor +1 \leq x^* , y^* \leq \left\lfloor \frac{n}{a}\right\rfloor.$$
       
       Therefore any blue pair $(x^*, y^*)$ must satisfy that 
       $$\left(a+b\right)\left(\left\lfloor\frac{n}{a(a+b)}\right\rfloor+1\right)\leq ax^* + by^* \leq \left(a+b\right)\left\lfloor\frac{n}{a}\right\rfloor,$$ 
       
       which implies that all corresponding values $z^*$, such that $ax^*+by^*=z^*$, must be red since all integers greater than $\left\lfloor\frac{n}{a}\right\rfloor$ are red. Hence,

       $$\left\lfloor\frac{n}{a}\right\rfloor < \left(a+b\right) \left(\left\lfloor\frac{n}{a(a+b)}\right\rfloor+1 \right) \leq z^*.$$ Therefore, there can only be red solutions in this coloring.

       \begin{observation}
           There cannot be any red solutions where $x^*$ or $y^*$ belong to $[\lfloor \frac{n}{a}\rfloor +1,n]$ because any red value of $x^*$ and $y^*$ is at most $\lfloor \frac{n}{a}\rfloor$. 
       \end{observation}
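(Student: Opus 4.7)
The observation is a routine range check: I plan to show that for purely arithmetic reasons, no integer in the upper red block $[\lfloor n/a\rfloor+1,\,n]$ can appear as $x^*$ or $y^*$ in any solution $(x^*,y^*,z^*)$ of $\E$ with $z^*\in[n]$, irrespective of color, and then to invoke the definition of $\chi$ from the preceding paragraph.

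The core step is to extract pointwise bounds on $x^*$ and $y^*$ directly from the equation. Since $a,b,x^*,y^*$ are positive integers and $ax^*+by^*=z^*\le n$, one has $ax^*\le z^*-by^*\le n-b<n$, and hence $x^*\le \lfloor(n-b)/a\rfloor\le \lfloor n/a\rfloor$. An identical computation with the roles of $x^*$ and $y^*$ swapped yields $y^*\le \lfloor n/b\rfloor$, which, since $a<b$, is in turn at most $\lfloor n/a\rfloor$. Thus every $x^*$ and $y^*$ that appears in any integer solution of $\E$ over $[n]$ already lies in the interval $[1,\lfloor n/a\rfloor]$.

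Recalling that $\chi$ assigns red exactly to $[1,\lfloor n/(a(a+b))\rfloor]\cup[\lfloor n/a\rfloor+1,\,n]$, the only red integers that can play the role of $x^*$ or $y^*$ must come from the lower red block $[1,\lfloor n/(a(a+b))\rfloor]$, which is precisely the content of the observation. There is no substantial obstacle; the only point that merits a brief remark is that even though $y^*$ satisfies the stronger bound $\lfloor n/b\rfloor$, the weaker common bound $\lfloor n/a\rfloor$ is all that is needed to simultaneously exclude the upper red block from both variables, which is the form of the statement that will be convenient when bounding $\mu_\chi(\E,n,2)$ in the remainder of the proof of Theorem~\ref{upperbax+by}.
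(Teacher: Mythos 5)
Your argument is correct and is essentially the same as the paper's: the observation is justified in the text by the single remark that any value of $x^*$ or $y^*$ occurring in a solution with $z^*\le n$ is forced to be at most $\lfloor n/a\rfloor$ (since $ax^*\le z^*\le n$ and $by^*\le z^*\le n$ with $a<b$), which is exactly the pointwise range check you carry out in more detail. No discrepancy to report.
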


       Hence, if $(x^* , y^* , z^*)$ is a red triple such that $z^* \in [\lfloor \frac{n}{a}\rfloor +1,n]$, then $x^*,y^*$ must belong to $[1, \lfloor\frac{n}{a(a+b)}\rfloor]$. However, if $x^* , y^* \in [1, \lfloor\frac{n}{a(a+b)}\rfloor]$ are red, then $ax^* + by^* = z^*\leq (a+b)\lfloor \frac{n}{a(a+b)}\rfloor < \lfloor \frac{n}{a}\rfloor+1$. 
       
       Therefore,  

        \begin{observation}
           Red triples $(x^* , y^* , z^*)$ such that $z^* \in [\lfloor \frac{n}{a}\rfloor +1,n]$ 
           cannot exist, and any monochromatic solution $(x^*,y^*,z^*)$ must lie in $[1,\lfloor \frac{n}{a(a+b)}\rfloor]$.
       \end{observation}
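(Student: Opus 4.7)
The plan is to combine the first Observation with a direct arithmetic estimate on $z^* = ax^* + by^*$ to squeeze every monochromatic triple into the lower red block $[1, \lfloor n/(a(a+b))\rfloor]$. No new machinery is required beyond what the proof has already set up.

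First, I would assemble the reductions already in place. The paragraph immediately preceding the statement rules out every all-blue monochromatic solution: if $x^*, y^*$ are both blue then
$$ax^* + by^* \;\geq\; (a+b)\Bigl(\left\lfloor\tfrac{n}{a(a+b)}\right\rfloor + 1\Bigr) \;>\; \left\lfloor\tfrac{n}{a}\right\rfloor,$$
so the corresponding $z^*$ lies in the upper red interval $[\lfloor n/a\rfloor +1, n]$ and cannot be blue. Hence every monochromatic triple is red. The first Observation further forbids $x^*$ or $y^*$ from lying in the upper red block, since $x^* \geq \lfloor n/a\rfloor +1$ would force $z^* \geq a(\lfloor n/a\rfloor +1) > n$, outside $[n]$. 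Consequently, the red variables $x^*, y^*$ are pinched into the lower red block $[1, \lfloor n/(a(a+b))\rfloor]$.

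Second, I would bound the induced value of $z^*$. For $x^*, y^* \in [1, \lfloor n/(a(a+b))\rfloor]$ the triangle-style estimate
$$z^* \;=\; ax^* + by^* \;\leq\; (a+b)\left\lfloor \frac{n}{a(a+b)} \right\rfloor \;\leq\; \frac{a+b}{a(a+b)}\, n \;=\; \frac{n}{a}$$
gives $z^* \leq \lfloor n/a \rfloor$, so $z^*$ is \emph{not} in the upper red block. Since the triple is monochromatic red, $z^*$ must nonetheless be red, and by the definition of $\chi$ the red integers in $[1, \lfloor n/a\rfloor]$ are exactly those in $[1, \lfloor n/(a(a+b))\rfloor]$. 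Therefore $z^*$ also lies in the lower red block, establishing the second assertion.

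Finally, the first assertion of the Observation (no red triple has $z^* \in [\lfloor n/a\rfloor +1, n]$) is then just the contrapositive of the bound $z^* \leq \lfloor n/a\rfloor$ proved in the second step. The only place that requires any care is the inequality $(a+b)\lfloor n/(a(a+b))\rfloor \leq n/a$, but this is immediate from $\lfloor t \rfloor \leq t$, so there is no genuine obstacle; the argument is a short chain of containments threading the two red subintervals together.
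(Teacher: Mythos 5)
Your proof is correct and takes essentially the same route as the paper: you use the first Observation to confine red $x^*,y^*$ to the lower block $[1,\lfloor n/(a(a+b))\rfloor]$, then bound $z^* = ax^*+by^* \le (a+b)\lfloor n/(a(a+b))\rfloor \le n/a$ so that a red $z^*$ must also lie in the lower block. The only cosmetic difference is that you phrase the paper's inequality $(a+b)\lfloor n/(a(a+b))\rfloor < \lfloor n/a\rfloor + 1$ as $z^* \le \lfloor n/a \rfloor$, which is equivalent.
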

       
        It is sufficient to count the number of integer points in the region of $\mathbb{Z}^2$ surrounded by 
        
        $$0<x\leq \left\lfloor\frac{n}{a(a+b)}\right\rfloor, \hspace{3pt} 0<y\leq \left\lfloor\frac{n}{a(a+b)}\right\rfloor,$$ and $$ax+by= \left \lfloor\frac{n}{a(a+b)}\right\rfloor.$$ 
        
        We can do this by counting the number of integer points in each segment $ax+by=z$ contained in this region. As we noted before, any red value of $z^*$ can be found in $[1, \lfloor\frac{n}{a(a+b)}\rfloor]$, therefore we proceed to count the number of integer points in each segment $ax+by=z$ for $0<x\leq \lfloor\frac{n}{a(a+b)}\rfloor$, $0<y\leq \lfloor\frac{n}{a(a+b)}\rfloor$, and $a+b\leq z \leq \lfloor\frac{n}{a(a+b)}\rfloor$. We use a classic result from Ehrhart theory called Popoviciu's theorem \cite{beck2007computing} to count these points,

       $$\sum_{z=a+b}^{\lfloor\frac{n}{a(a+b)}\rfloor} P_{a,b}(z) = \sum_{z=a+b}^{\lfloor\frac{n}{a(a+b)}\rfloor} \# \left\{ (k,l)\in \mathbb{Z}^2 \mid k,l\geq 0, ak+bl=z \right\} = \sum_{z=a+b}^{\lfloor\frac{n}{a(a+b)}\rfloor} \left( \frac{z}{ab} - \left\{ \frac{b^{-1}z}{a} \right\} - \left\{\frac{a^{-1}z}{b} \right\} +1 \right),$$
    
    which gives us the desired result.

To prove that this bound is not tight in general, we conducted experiments using integer programming for various values of $a$, $b$ and $n \geq R_2(ax+by=z)$ (see \cite{WJWThesis}). Let ${\cal E}: ax+by=z$ and let $G_{\cal E}(n,a,b)$ be the upper bound given in Theorem \ref{upperbax+by}.  See Table \ref{table:2x+3y=z} for a full comparison of our results and the values given by the upper bound when $a=2$ and $b=3$. See Appendix \ref{amyIPexperiments} for the remaining experiments.

\begin{table}[H]
\centering
\begin{tabular}  {|p{0.1\textwidth}|p{0.15\textwidth}|p{0.15\textwidth}|p{0.15\textwidth}|p{0.15\textwidth}|}
           \hline
           $n$&  75&  100&  150& 200\\
           \hline
           $M_{\cal E}(n)$&  2&  4&  12& 24\\
           \hline
           $G_{\cal E}(n,2,3)$&   $4$ &  $9$& $23$& $40$\\
           \hline
           $T_{\cal E}(n)$&  444&  800&  1825& 3867\\
           \hline
           \textbf{Optimal Coloring}&  $0^{7}1^{29}01^{2}0$ $10^{34}$&  $0^{9}1^{40}0^{51}$&  $0^{14}1^{59}0^{2}10^{74}$& $0^{19}1^{80}0^{101}$\\
           \hline
      
\end{tabular}
\caption{Results for ${\cal E}: 2x+3y=z$.}
\label{table:2x+3y=z}
\end{table}

   \end{proof}

\begin{figure}[h!]
    \centering
\begin{tikzpicture}

\draw (0,0.1)--(0,-0.1) node[black,below]{$0$};
\draw [red, ultra thick] (0,0)--(1,0);
\draw [blue, ultra thick] (1,0)--(5,0);
\draw [red, ultra thick] (5,0)--(15,0);

\draw (1,0.1)--(1,-0.1) node[black,below]{$\frac{n}{a(a+b)}$};
\draw (5,0.1)--(5,-0.1) node[black,below]{$\frac{n}{a}+1$};
\draw (15,0.1)--(15,-0.1) node[black,below]{$n$};

\end{tikzpicture}
\caption{Coloring $\chi$ used in Theorem \ref{upperbax+by} where $[1, \lfloor\frac{n}{a(a+b)}\rfloor]$ and $[\lfloor \frac{n}{a}\rfloor +1,n]$ are red blocks and the rest is blue.}
\label{coloring_ax+by}
 \end{figure}
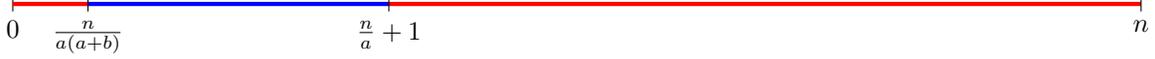






\section{Max-Cut and Best Colorings via Integer Optimization} \label{methodsofIPSDP}

In this section we describe two approaches that allow us to compute optimal or quasi-optimal colorings. We deal mostly with the case of 2 colors, but the theory can be naturally extended to more colors. The key idea is to model this problem as a Max-Cut problem in a graph that one reads 
from the solutions. Recall that for a weighted graph $G= (V,E,w)$, a \emph{maximum cut} of $G$ is 

\begin{align*}
    MAXCUT(G) := \max_{S \subseteq V} \sum_{i \in S, j \in V \setminus S} w_{ij}. 
\end{align*}

A well-known integer programming formulation of the Max-Cut problem is 
\begin{equation} \label{MAXCUT_IP}
    MAXCUT(G) = Z_{IP} := \max \frac 12 \sum_{i < j} w_{ij}(1-x_i x_j), \\
       \quad  x_i \in \{-1,1\} \quad \forall i \in V.  
\end{equation}

The graphs we use are based on the linear equation of study. For a linear equation $\E$, let $G_{\E,n}$ be the weighted graph with vertex set $[n]$ and edge weights $w_{ij}$ for $i \neq j$. Each weight $w_{ij}$ is the number of times $\{i,j\}$ appear together in a solution to the equation $\E$. In general we will denote the adjacency matrix of $G_{\E,n}$ by $C_{\E,n}$.  
For example, the equation $x+y=z$ when $n=5$ is associated to the graph $G_{x+y = z,5}$ which has five vertices.  There is an edge from $i$ to $j$ when they appear together as either $\{x,y\}, \{x,z\}$, or $\{y,z\}$ in a solution $(x,y,z)$ to $ax+by = cz$ (see Figure \ref{fig:weightedgraph}). Note that the solution $(1,1,2)$ contributes $2$ to the weight $w_{1,2}$. 
Thus $G_{x+y = z,5}$ has the following adjacency matrix.

$$C_{x+y = z,5} = \begin{pmatrix} 0 & 4 & 4 & 4 & 2 \\
4 & 0 & 4 & 2 & 2 \\
4 & 4 & 0 & 2 & 2 \\
4 & 2 & 2 & 0 & 2 \\
2 & 2 & 2 & 2 & 0 \\
\end{pmatrix}.$$

\begin{figure}[h]
    \centering
    \includegraphics[scale = 0.5,width=0.4\linewidth]{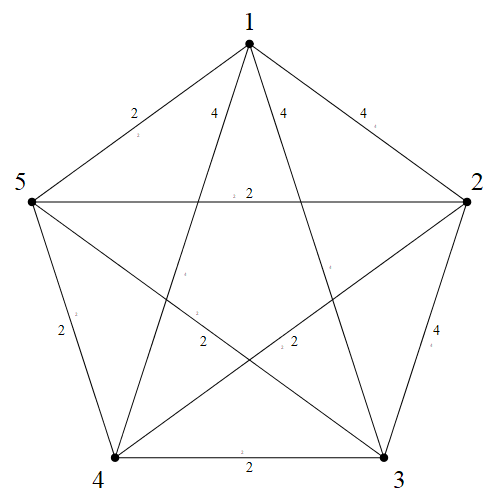}
    \caption{The weighted graph $G_{x+y=z,5}$.}
    \label{fig:weightedgraph}
\end{figure}

For a given $k$-coloring $\chi$ of $[n]$, we denote by $\nu_\chi(\E,n,k)$ the number of non-monochromatic solutions to $\E$. The key idea is that we can bound the number of non-monochromatic solutions in terms of the max-cut of the graph $G_{\E,n}$. We can clearly extend this further.
\begin{lemma}
    Let $\E$ be an equation of the form $ax+ by = cz$. The number of non-monochromatic solutions to $\E$ in a $2$-coloring of $[n]$ is at most $MAXCUT(G_{\E,n})$. 
\end{lemma}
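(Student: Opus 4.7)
My plan is to rewrite the weight of any given 2-coloring's cut as a sum over solutions of $\E$, and show each non-monochromatic solution contributes at least one unit (in fact exactly two) to this sum, while monochromatic solutions contribute zero. Concretely, I would fix a 2-coloring $\chi$ of $[n]$ and set
$$\mathrm{cut}_\chi \;=\; \sum_{\{i,j\}:\, \chi(i) \neq \chi(j)} w_{ij}.$$
By the definition of $w_{ij}$, this weight counts pairs of positions in solutions $(x,y,z)$ whose value pair $\{i,j\}$ is bichromatic. Interchanging the order of summation over pairs and over solutions yields
$$\mathrm{cut}_\chi \;=\; \sum_{(x,y,z)\in\mathrm{Sol}(\E,n)} \Bigl(\mathbf{1}[\chi(x)\neq\chi(y)] + \mathbf{1}[\chi(x)\neq\chi(z)] + \mathbf{1}[\chi(y)\neq\chi(z)]\Bigr).$$

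Next, I would analyze the inner sum solution by solution. A monochromatic solution contributes $0$ trivially. For a non-monochromatic solution in a $2$-coloring, Lemma \ref{Lemma_nonmono_counting_pairs} (or a direct case split on whether the three values are all distinct or exactly two coincide) implies that precisely two of the three indicator terms equal $1$. This yields the stronger identity $\mathrm{cut}_\chi = 2\,\nu_\chi(\E,n,2)$, which in particular gives $\nu_\chi(\E,n,2) \le \mathrm{cut}_\chi$.

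Finally, since $\mathrm{cut}_\chi \le \mathrm{MAXCUT}(G_{\E,n})$ by definition of the max-cut, the claim $\nu_\chi(\E,n,2) \le \mathrm{MAXCUT}(G_{\E,n})$ follows immediately (in fact with a factor of $1/2$ to spare, which is consistent with the exact formula $M_\E(n,2) = T_\E(n) - \tfrac{1}{2}\mathrm{MAXCUT}(G_{\E,n})$). The only mildly delicate point is the accounting when a solution has a repeated value, for example $x=y$: the unordered pair $\{x,z\}=\{y,z\}$ then appears with multiplicity $2$ among the position-pair contributions, and so $w_{xz}$ naturally absorbs this multiplicity. A short case check confirms the per-solution contribution is still exactly $2$. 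I do not anticipate any serious obstacle beyond tracking this bookkeeping carefully.
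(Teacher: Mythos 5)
Your proposal is correct and follows essentially the same route as the paper: both arguments rest on the observation that a non-monochromatic solution of a three-variable equation under a two-coloring has exactly two bichromatic pairs among $\{x,y\},\{x,z\},\{y,z\}$, which by double counting gives $2\nu_\chi(\E,n,2)=\sum_{\chi(i)\neq\chi(j)}w_{ij}\le MAXCUT(G_{\E,n})$. Your extra care with repeated values (e.g.\ $x=y$) matches the paper's convention that such a solution contributes multiplicity $2$ to the corresponding weight, so nothing further is needed.
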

\begin{proof}
    For an arbitrary coloring of $[n]$, if $i$ and $j$ receive different colors, then $x_i$ and $x_j$ have opposite signs in \eqref{MAXCUT_IP}. If a solution to $ax+by = cz$ is non-monochromatic, then exactly two of $\{x,y\}$, $\{x,z\}$, and $\{y,z\}$ are non-monochromatic. Therefore if $i$ and $j$ have different colors, this contributes exactly $w_{ij}$ to the cut, and $$\nu_\chi(\E,n,2) = \frac 12 \sum_{i<j, \chi(i) \neq \chi(j)} w_{ij} \le \frac 12 MAXCUT(G_{\E, n}).$$
\end{proof}

The integer program in \eqref{MAXCUT_IP} can be solved using standard tools from optimization such as branch-and-bound and cutting planes as implemented in GUROBI \cite{gurobi}. 

In our research, instead of using the quadratic model \eqref{MAXCUT_IP}, we use an alternative linear model to compute optimal colorings given the graph $G_{\E,n}$. Concretely, the model we use is as follows:
\begin{align*}
\text{maximize}\ &\sum_{(i,j)\in E}w_{ij}e_{ij} \\
\text{subject\ to}\ e_{ij}&\leq x_i+x_j\ &\forall(i,j)\in E \\
e_{ij}&\leq 2-(x_i+x_j)\ &\forall(i,j)\in E \\
x_i,x_j&\in \{0,1\}\ &\forall(i,j)\in E \\
e_{ij}&\in \{0,1\}\ &\forall(i,j)\in E 
\end{align*}

The computation we did using integer programs provided optimal 2-colorings for all equations in Section \ref{theupperbounds} and several values of $n$. These optimal values helped us to predict upper bounds and show that the theorems in Section \ref{theupperbounds} are not tight in general. 



As an alternative, we turn to the following \emph{semidefinite relaxation} of \eqref{MAXCUT_IP}. 

\begin{equation}\label{MAXCUT_SDP}
 Z_{SDP} = \max \frac 12 \sum_{i < j} w_{ij}(1-X_{ij}), \\
       \quad  X_{ii} =1 \quad \forall i \in V, X = (X_{ij}), X \succeq 0.
\end{equation}

This relaxation was studied in \cite{GoemansWilliamson} and can be used to give a good approximation algorithm for the Max-Cut problem. We have that $Z_{SDP} \ge Z_{IP}$, but in general this inequality is strict. The semidefinite program above can be solved for a fixed $n$, but does not immediately yield any asymptotics for the number of monochromatic solutions. In the next section we solve a different semidefinite program model, to reach exact bounds.

\section{Optimal Colorings with a Semidefinite Program Trace Bound Method} \label{jacktracesdp}

 The authors in \cite{ParriloRobertsonSaracinoMonochromaticAP} used the following lemma to give lower bounds on the number of monochromatic arithmetic progressions in a 2-coloring of $[n]$.
\begin{lemma} \label{Lemma_QuadraticFormPSDBound}
    If $A$ is an $n \times n$ matrix and $x \in [-1,1]^n$, then if $D = diag(d_1,d_2,\dots,d_n)$ is such that $A+D$ is positive semidefinite, then $x^T A x \ge -\sum_{i=1}^n d_i$. 
\end{lemma}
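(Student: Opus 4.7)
The plan is to read off the conclusion directly from the definition of positive semidefiniteness, using the hypothesis $A+D\succeq 0$ as a nonnegativity certificate for the quadratic form $x^T(A+D)x$. First I would note that for any vector $x\in\mathbb{R}^n$, positive semidefiniteness of $A+D$ gives
$$x^T(A+D)x \;=\; x^T A x + x^T D x \;\ge\; 0,$$
so rearranging,
$$x^T A x \;\ge\; -\,x^T D x \;=\; -\sum_{i=1}^n d_i x_i^2.$$
This is the single content-bearing inequality; everything after is bookkeeping about the feasible set for $x$.

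Next I would specialize to the domain. The intended application (from the max-cut model in the preceding section) is $x\in\{-1,1\}^n$, in which case $x_i^2=1$ for every $i$ and the bound collapses to
$$x^T A x \;\ge\; -\sum_{i=1}^n d_i,$$
which is exactly the claim. If one wants the statement for the full box $x\in[-1,1]^n$ as written, I would add the observation that the same conclusion follows provided the diagonal entries $d_i$ may be chosen nonnegative (which is without loss of generality by translating any negative $d_i$ into $A$ if necessary, since enlarging $d_i$ preserves $A+D\succeq 0$); then $0\le x_i^2\le 1$ forces $d_i x_i^2\le d_i$, and summing yields $-\sum d_i x_i^2\ge -\sum d_i$, completing the proof.

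There is no genuine obstacle here: the lemma is a one-line consequence of the semidefiniteness hypothesis, and the only subtlety is matching the domain of $x$ with the right sign control on the diagonal correction $D$. The real weight of the lemma is in how it will be deployed in the sequel, where one will search over diagonal matrices $D$ (via a semidefinite program) to minimize $\sum d_i$ subject to $A+D\succeq 0$, thereby converting the spectral hypothesis into a lower bound on the quadratic form encoding non-monochromatic solution counts.
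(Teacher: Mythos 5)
Your proof is correct and is the standard argument; the paper itself does not prove this lemma but quotes it from Parrilo--Robertson--Saracino, so the only available comparison is with the expected one-line expansion of $x^T(A+D)x \ge 0$, which is exactly what you give. One remark: you are right that the step $-\sum_i d_i x_i^2 \ge -\sum_i d_i$ genuinely requires $d_i \ge 0$ on the box $[-1,1]^n$ (as literally stated the lemma fails for $n=1$, $A=(1)$, $d_1=-1$, $x=0$), but your ``without loss of generality'' repair does not recover the stated bound for negative $d_i$ --- enlarging the negative entries to $0$ only yields the weaker conclusion $x^TAx \ge -\sum_i \max(d_i,0)$ --- so the honest reading is that nonnegativity of $d$ is an implicit hypothesis of the lemma, one that is satisfied by every vector $d$ the paper actually uses in its applications.
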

This matrix $A+D$ can be found using semidefinite programming. To illustrate the method, we first give an example that is another proof that $M_{\text{Schur}}(n) \ge \frac{n^2}{11}$. Our initial analysis is essentially the same as in \cite{ThanatipanondaSchurTriplesProblem2009}, but we arrive at the conclusion by counting monochromatic solutions indirectly with the use of Lemma \ref{Lemma_nonmono_counting_pairs}. 

\begin{example}\label{ExampleSchur}
Let $\chi$ be an arbitrary 2-coloring of $[n]$, and let $\E$ be the Schur equation $x + y = z$. Observe that $\mu_\chi(\E,n,2) = T_\E(n) - \nu_\chi(\E,n,2)$. For $x+y = z$, we have $\mathcal S = n^2/2 - O(n)$. From Lemma \ref{Lemma_nonmono_counting_pairs}, write $\nu_\chi(\E,n,k) = \frac 12 (\D_{xy} + \D_{xz} + \D_{yz})$, where $$\D_{xy} = |\{(i,j) : i = x, j = y \text{ for some solution to } (x,y,z) \text{ and } i,j \text{ have different colors}\} |.$$ 

We have that $(x,z) = (i,j)$ is part of a solution to $x+y = z$ if and only if $i<j$. For each red-blue pair, exactly one integer is larger, so we have $\D_{xz} = \D_{yz} = |R||B|$, where $R$ is the set of red integers and $B$ is the set of blue integers. We have that $(x,y) = (i,j)$ is part of a solution to $x+y = z$ if and only if $i+j \le n$. 

Now subdivide $[n]$ into $k$ intervals, and let $r_i,b_i$ denote the number of red and blue integers, respectively, in interval $i$. Following \cite{ThanatipanondaSchurTriplesProblem2009}, let $S_{ij} = r_ib_j + r_jb_i$ be the number of dichromatic pairs in the square $I_i \times I_j$. 
Then, we have that $\D_{xy} \le \sum_{i+j \le k+1} S_{ij}$ and $\D_{xz} = \D_{yz} = (\sum_{i=1}^k r_i)(\sum_{i=1}^k b_i)$. 

We need to make one additional refinement. For $i+j = k+1$, we are overcounting. For $i$ and $j$ where we expect $r_i = r_j = n/k$ and $b_i = b_j = 0$, this does not matter, but if $r_i = b_j = n/k$, this is a problem. We instead bound the number by half the area of the square $I_i \times I_j $, which is $\frac{n^2}{2k^2}$ (we can do this since half the pairs $(x,y)$ in that square do not form solutions because $x+y > n)$.

Since we know a priori that we should have $r_i = n/k$ for $i \in \{1,2,3,4,11\}$ and $r_i = 0$ otherwise, we bound $S_{ij}$ by $n^2/242$ for $(i,j) \in X:= (2,10),(3,9),(4,8),(8,4),(9,3),(10,2)$. 

Putting it all together gives $$\mathcal \nu_\chi(\E,n,k) \le \frac12 \left( 2 (\sum_{i=1}^k r_i)(\sum_{i=1}^k b_i) + \sum_{i+j \le k+1, (i,j) \not \in X } S_{ij} + 6n^2/242\right).$$

Now we make the change of variable $r_i =\frac{(1+x_i)}{2}\frac{n}{11}$, $b_i =\frac{(1-x_i)}{2}\frac{n}{11}$. 

This gives $\nu_\chi(\E,n,k) \le \frac{17n^2}{44} +\frac{n^2}{121}Q$, where $Q$ is a quadratic form. We can then bound $Q$ using Lemma \ref{Lemma_QuadraticFormPSDBound}.

Write $Q = x^T A x$. Then we have $$\mu_\chi(\E,n,k) \ge \frac{n^2}{2} - \frac{17n^2}{44} + (x^T(-A)x)\frac{n^2}{121} - O(n).$$

We can check that $-A + diag(d)$ is positive semidefinite where $d = (\frac{1}{2},\frac 12, \frac 14, 0,0,\frac{1}4,\frac 12, \frac 12,\frac 14,0,0)$. Then $$\mu_\chi(\E,n,k) \ge \frac{n^2}{2} - \frac{17n^2}{44} -\frac{11}4 \frac{n^2}{121} = \frac{n^2}{11} -O(n)$$ as desired.
\end{example}

 The general method is as follows: partition the interval $[n]$ into blocks $P_1,\dots, P_k$. Note that in Example \ref{ExampleSchur} the blocks were subintervals, but this need not be the case in general. Each block $P_i$ contains some number of red integers $r_i$ and blue integers $b_i$. For a sensible partition, the number of non-monochromatic pairs can be bounded by a quadratic form in $r_i$ and $b_i$. Then under the change of variable $$r_i = \frac{(1+x_i)|P_i|}{2}, b_i = \frac{(1-x_i)|P_i|}{2},$$ the number of nonmonochromatic solutions is a quadratic form in $x_i$, where $x_i \in [-1,1]$. The following lemma allows us to bound this quadratic form (see \cite{ParriloRobertsonSaracinoMonochromaticAP}).
 
We can apply this method to any equation, though its usefulness varies. There are three possibilities: \emph{tight} bounds, \emph{nontrivial} bounds, and \emph{trivial} bounds. The best case is that we obtain a tight lower bound that matches the upper bound from a given coloring. Example \ref{ExampleSchur} shows that we get a tight lower bound for the Schur equation. We will show that we obtain tight bounds for other equations as well. In \cite{ParriloRobertsonSaracinoMonochromaticAP}, the bound for the equation $x+y = 2z$ obtained   by this method is close, but not equal to, the best known upper bound (which is conjectured to be optimal). They remark that in general there is a gap between the SDP relaxation and the original problem. In this particular instance, they partitioned $[n]$ into 128 subintervals to bound their quadratic form. This overcounts the number of non-monochromatic solutions, and more intervals will give better bounds. However, the bounds given by this method may not converge to the optimum. In this case we obtain a \emph{nontrivial} (and still useful), but not tight bound. Unfortunately, for some equations $\E$, we get the useless bound $M_\E(n) \ge f(n)$ for some $f(n) \le 0$. This appears to occur for equation $4x+4y = z$, for example. 

Given an equation, it is not immediately clear which type of bounds this method gives, nor is it clear how many intervals or how fine a partition is necessary to obtain tight bounds. It would be interesting to investigate this phenomenon further in future work and determine which equations give tight or nontrivial bounds.

\begin{remark}
Although this technique does not work for the equation $4x+4y = z$, it does work for the equation $4x-4y = z$. For the second equation, coloring multiples of 4 red and all other integers blue is optimal. This same coloring is conjectured to be optimal for the first equation as well, but the trace method appears to give trivial bounds. However, if we consider both equations together, that is, if we minimize the total number of monochromatic triples $(x,y,z)$ that are solutions to \emph{either} equation, we do obtain a tight bound. 
\end{remark}

We can now apply this technique to equations of the form $ax - ay = z$ to prove Theorem \ref{Theorem_ax_ay_z} and obtain \emph{tight} lower bounds for small values of $a$. 

\begin{reptheorem} {Theorem_ax_ay_z}
Let ${\cal E}:ax-ay = z$ and $n\geq R_2 ({\cal E})$. For $3\le a \le 7$, the following holds 
$$M_{\cal E}(n) \ge \frac{(2a-1)n^2}{2a^4},$$ 
where an optimal coloring is given by coloring all multiples of $a$ red and every other integer blue. 
\end{reptheorem}

\begin{proof}[Proof of Theorem \ref{Theorem_ax_ay_z}]


For this lower bound, we carry out similar analysis as we did for the Schur equation $x+y = z$. Here we let $\E$ be the equation $a(x-y) = z$. We again subdivide $[n]$ into $k$ equally sized intervals $I_1,\dots, I_n$. Let $\chi$ be an arbitrary 2-coloring of $[n]$. Let $r^*_i$ denote the number of integers in $I_i$ that are colored red and are not multiples of $a$. Let $r^a_i$ denote the number of integers in $I_i$ that are colored red and are multiples of $a$. Define $b^*_i$ and $b^a_i$ similarly for blue integers. Clearly we have \begin{align*}r^*_i+b^*_i &= \left(1-\frac 1a \right)\frac{n}k,\\
r^a_i+b^a_i &= \frac{n}{ak},
\end{align*}
for all $i$. 

The next step is to find expressions for $\D_{xy}, \D_{xz}$, and $\D_{yz}$ in terms of $r^*_i,r^a_i,b^*_i$, and $b^a_i$. 

For $\D_{xy}$, we have already seen that $(x,y)$ appears in a solution $(x,y,z)$ if and only if $ 1 \le x-y \le n/a$. Therefore all such pairs $(x,y)$ solutions to $\E$ lie in the shaded region below. 

\begin{center}
\begin{tikzpicture}
 \draw[fill=gray!50!white] plot[smooth,samples=100,domain=0:3] (\x,{\x}) -- 
     plot[smooth,samples=100,domain=3:1] (\x,{\x-1});
\draw[samples=100, domain=1:3] plot (\x,{\x-1});
\draw[samples=100,domain=0:3] plot (\x,{\x}) ;
\draw[samples=100,domain=2:3] plot (3,{\x}) ;
\draw (0,0)--(3.5,0) node[right]{$x$};
 \draw (0,0)--(0,3.5) node[above]{$y$};

 \draw (3,2pt)--(3,-2pt) node[below] {$n$};
 \draw (2pt,3)--(-2pt,3) node[left] {$n$};
 \draw (1,2pt)--(1,-2pt) node[below] {$\frac{n}{a}$};
\end{tikzpicture}

\end{center} 
For each square $I_i \times I_j$, we bound the number of nonmonochromatic pairs $(x,y)$ in the square by 

$$S^{xy}_{ij} := \begin{cases}
    0 & \text{ if } I_i \times I_j \text{ does not intersect the region}, \\
    (r^*_i+r^a_i)(b^*_j+b^a_j)+(b^*_i+b^a_i)(r^*_j+r^a_j) & \text{ if $I_i \times I_j$ is completely in the interior of the region}, \\
    r^*_i b^*_j +r^a_i b^a_j + b^*_i r^*_j + b^a_i r^a_j + \frac 1 a\left(1-\frac 1a\right) \frac{n^2}{k^2} & \text{otherwise.}
\end{cases}$$
In the last case observe that exactly half the area of the square is contained in the region. Then the total number of pairs $(x,y)$ in the square where one of $x$ and $y$ is a multiple of $a$ and the other is not is $\frac 2 a\left(1-\frac 1a\right) \frac{n^2}{k^2}$, and half of these are inside the region. (Note also that if we knew, a priori, that the multiples of $a$ coloring is optimal, then the remaining terms in the bound are all 0.)

We proceed with similar analyses for $\D_{xz}$ and $\D_{yz}$. Observe that $(x,z)$ appears in a solution $(x,y,z)$ to $\E$ if and only if $z$ is a multiple of $a$ and $z \le ax$. Then, all such pairs $(x,z)$ lie in the shaded region below. 

\begin{center}
\begin{tikzpicture}
 \draw[fill=gray!50!white] plot[smooth,samples=100,domain=0:3] (\x,0) -- 
     plot[smooth,samples=100,domain=3:1] (\x,3);
\draw[samples=100, domain=1:3] plot (\x,3);
\draw[samples=100,domain=0:1] plot (\x,{3*\x}) ;
\draw[samples=100,domain=0:3] plot (3,{\x}) ;
\draw (0,0)--(3.5,0) node[right]{$x$};
 \draw (0,0)--(0,3.5) node[above]{$z$};

 \draw (3,2pt)--(3,-2pt) node[below] {$n$};
 \draw (2pt,3)--(-2pt,3) node[left] {$n$};
 \draw (1,2pt)--(1,-2pt) node[below] {$\frac{n}{a}$};
\end{tikzpicture}
\end{center} 
We again bound the number of nonmonochromatic pairs $(x,z)$ in the square $I_i \times I_j$ by
$$S^{xz}_{ij} := \begin{cases}
    0 & \text{ if } I_i \times I_j \text{ does not intersect the region}, \\
    (r^*_i+r^a_i)b^a_j+(b^*_i+b^a_i)r^a_j & \text{ if $I_i \times I_j$ is completely in the interior of the region}, \\
     r^a_i b^a_j +b^a_i r^a_j + \frac{A^{xz}_{ij}}{a}\left(1-\frac 1a\right) \frac{n^2}{k^2} & \text{otherwise},
\end{cases}$$
where $A^{xz}_{ij}$ is the fraction of the area of the square $I_i \times I_j$ contained in the region. 

For $\D_{yz}$, observe that $(y,z)$ is a solution to $\E$ if and only if $z$ is a multiple of $a$ and $ay + z \le na$. So the solutions are in the shaded region below. 

\begin{center}
\begin{tikzpicture}
 \draw[fill=gray!50!white] plot[smooth,samples=100,domain=0:3] (\x,0) -- 
     plot[smooth,samples=100,domain=2:0] (\x,3);
\draw[samples=100, domain=0:2] plot (\x,3);
\draw[samples=100,domain=2:3] plot (\x,{9-3*\x}) ;

\draw (0,0)--(3.5,0) node[right]{$y$};
 \draw (0,0)--(0,3.5) node[above]{$z$};

 \draw (3,2pt)--(3,-2pt) node[below] {$n$};
 \draw (2pt,3)--(-2pt,3) node[left] {$n$};
 \draw (1,2pt)--(1,-2pt) node[below] {$\frac{n}{a}$};
\end{tikzpicture}
\end{center} 

The bounds are similar to the $xz$-case: we bound the number of non-monochromatic $(y,z)$ in the square $I_i \times I_j$ by
$$S^{yz}_{ij} := \begin{cases}
    0 & \text{ if } I_i \times I_j \text{ does not intersect the region}, \\
    (r^*_i+r^a_i)b^a_j+(b^*_i+b^a_i)r^a_j & \text{ if $I_i \times I_j$ is completely in the interior of the region}, \\
     r^a_i b^a_j +b^a_i r^a_j + \frac{A^{xz}_{ij}}{a}\left(1-\frac 1a\right) \frac{n^2}{k^2} & \text{otherwise,}
\end{cases}$$
where $A^{yz}_{ij}$ is the fraction of the area of the square $I_i \times I_j$ contained in the region. 

Putting it all together, by Lemma \ref{Lemma_nonmono_counting_pairs} we have \begin{align*} 
\mu_\chi(\E,n,k) &= T_\E(n) - \nu_\chi(\E,n,k) \\
 & \ge \frac{2a-1}{2a^2}n^2 - \frac12\left(\sum_{1 \le j \le k}S^{xy}_{ij}+S^{xz}_{ij}+S^{yz}_{ij}\right).
\end{align*}
Next, we make the following variable substitutions:
\begin{align*}
    r^*_i &= \frac{1+x_i}{2}\left(1-\frac{1}{a}\right)\frac{n}{k}, &r^*_a = \left(\frac{1+y_i}{2}\right)\frac{n}{ak}, \\ 
    b^*_i &= \frac{1-x_i}{2}\left(1-\frac{1}{a}\right)\frac{n}{k}, &b^*_a = \left(\frac{1-y_i}{2}\right)\frac{n}{ak}.
\end{align*}
Observe that $-1 \le x_i,y_i \le 1$ for all $i$. After this substitution we obtain the following: 

$$ \mu_\chi(\E,n,k) \ge \left(\frac{2a-1}{2a^2}-\alpha_{a,k}\right)n^2 + \textbf{x}^TA\textbf{x},$$
where $\alpha_{a,k}$ is a constant depending only on $a$ and $k$ and $\textbf{x}$ is the vector $(x_1,\dots,x_k,y_1,\dots,y_k)$. 

To obtain a bound on $\textbf{x}^TA\textbf{x}$, we used Lemma \ref{Lemma_QuadraticFormPSDBound} and found vectors $d$ such that $A+diag(d)$ is positive semidefinite using the SDP solver SeDuMi \cite{SeDuMi}. For $4 \le a \le 7$, we used $a^2$ subintervals. For $a=3$ this was insufficient and we used $k=54$ subintervals. We obtained $\alpha_{3,54} = \frac{277}{1296},\alpha_{4,16} = \frac{93}{512}, \alpha_{5,25} = \frac{73}{500}, \alpha_{6,36} = \frac{211}{1728},$ and $\alpha_{7,49} = \frac{36}{343}$. The precise values for $d$ are given in the Appendix \ref{Appendix_SPD_lower_bounds}. In each case, the given lower bound matches the upper bound $\mu_\chi(\E,n,k) \le \frac{2a-1}{2a^4}$. 
\end{proof}


We now perform similar analysis for the equation $x + y = 3z$. 

\begin{reptheorem} {Theorem_x_y_3z}
Let ${\cal E}:x+y=3z$ and $n\geq R_2 ({\cal E})$. The following bound holds

$$M_{\cal E} (n)\geq \frac{167n^2}{28800}.$$
\end{reptheorem}

\begin{proof}[Proof of Theorem \ref{Theorem_x_y_3z}]
    Let $\E$ be the equation $x + y = 3z$, and let $\chi$ be a 2-coloring of $[n]$. To count the total number of solutions to $\E$, observe that for all $x,y \in [n]$, we have $\frac{x+y}{3} \in [n]$ if and only if $x + y \equiv 0 \pmod 3$. Now for a given $x$, there are $\frac{n}{3} - O(1)$ possible values for $y \in [n]$ such that $x + y \equiv 0 \pmod 3$. Then there are $n\left(\frac{n}{3} - O(1)\right) = \frac{n^2}{3} - O(n)$ solutions to $\E$. 

Next we count non-monochromatic solutions to $\E$ as in Lemma \ref{Lemma_nonmono_counting_pairs}. As above, we see that $(x,y)$ appears in a solution $(x,y,z)$ if and only if $x + y \equiv 0 \pmod 3$. We have that $(x,z)$ appears in a solution $(x,y,z)$ if and only if $ \frac{x}{3} \le z \le \frac{x}{3} + \frac{n}{3}$, that is, $(x,z)$ lies in the shaded region below. 

\begin{center}
\begin{tikzpicture}
 \draw[fill=gray!50!white] plot[smooth,samples=100,domain=0:3] (\x,1/3*\x) -- 
     plot[smooth,samples=100,domain=3:0] (\x,1/3*\x + 1);
\draw (0,0)--(3.5,0) node[right]{$x$};
 \draw (0,0)--(0,3.5) node[above]{$z$};

 \draw (3,2pt)--(3,-2pt) node[below] {$n$};
 \draw (2pt,3)--(-2pt,3) node[left] {$n$};
 \draw (2pt,1)--(-2pt,1) node[left] {$\frac{n}{3}$};
\draw (2pt,2)--(-2pt,2) node[left] {$\frac{2n}{3}$};
\end{tikzpicture}
\end{center} 

For this equation we partition the interval $[n]$ into $k$ subintervals of length $\frac{n}{k}$. For $1\le i \le k$ and  $0 \le j \le 5$, let $r_{i}^j$ and $b_i^j$ denote the number of integers in subinterval $i$ congruent to $j \pmod 3$ that are colored red and blue, respectively. The number of non-monochromatic pairs $(x,y)$ is 

$$\mathcal{D}_{xy} = 2\sum_{i = 1}^k [r_i^0b_j^0+ r_i^1b_j^2 + r_i^2b_j^1] = 2(R^0B^0+R^1B^2+R^2B^1),$$
where $R^i = \sum_{j=1}^k r_j^i$ and $B^i = \sum_{j=1}^k b_j^i$. Note that $B^i = \frac{n}{3} - R^i$ and $0 \le R^i \le \frac{n}{3}$ for all $i$. It is not difficult to show via calculus that 
\begin{equation}\label{Dxy_bound}
    \D_{xy} \le \frac{5n^2}{18} 
\end{equation} and a global maximum occurs at $(R^0,R^1,R^2) = (\frac{n}{6},0,\frac{n}{3}).$

We then approximate the number of non-monochromatic pairs $(x,z)$ by $ \D_{xz} \le  \sum_{1\le i<j \le k} S_{ij}^{xz}$, where 
$$S_{ij}^{xz} = 
    \begin{cases}
    0 & \text{ if } I_i \times I_j \text{ does not intersect the region}, \\
   (\sum_{l=0}^2 r_i^l)(\sum_{l=0}^2 b_j^l) & \text{ otherwise}. \\
\end{cases}$$
From the symmetry of $x$ and $y$ in $\E$, we have $\D_{xz} = \D_{yz}$
Then, making the substitutions
$$ r_i^j = \frac{1+x_i^j}{2} \frac{n}{3k}, \hspace{10pt} b_i^j = \frac{1-x_i^j}{2} \frac{n}{3k},$$
setting $k = 30$ and using the bound \eqref{Dxy_bound}, we obtain
\begin{align*}
    \mu_\chi(\E,n,k) &\ge \frac{n^2}{3} - \frac{1}{2} (\D_{xy} + 2\D_{xz}) \\  & \ge \left(\frac{1}{3}-\frac{5}{36}-\frac{11}{60} \right)n^2 + \textbf{x}^T A\textbf{x},
\end{align*} 
where $\textbf{x}$ is the vector 


$$(x_1^0,\dots,x_k^0,x_1^1,\dots,x_k^1,x_1^2,\dots,x_k^2).$$

We then bound the quadratic form using Lemma \ref{Lemma_QuadraticFormPSDBound}. We obtained a $d$ such that $A+ diag(d)$ is positive semidefinite such that $\sum_{i= 1}^{90} d_i \le \frac{1377}{259200}$. We give the precise value of this $d$ in Appendix \ref{Appendix_SPD_lower_bounds}. 

\end{proof}
    
The lower bound in Theorem \ref{Theorem_x_y_3z} is far from the best known upper bound. It is computationally feasible to use more intervals to obtain a tighter bound, and different partition choices may yield better results as well. 

\section{More than two colors in Schur's equation} \label{sectionmorethan2colors}
Here we discuss briefly minimizing the number of monochromatic solutions to Schur's equation, $x+y = z$, for more than two colors. First, we give the colorings used to prove Theorem \ref{TheoremMuUpperBounds}. 

\begin{reptheorem}{TheoremMuUpperBounds}
    The following improved bounds on $M_{\emph{Schur}}(n,k)$ hold:

    $$M_{\emph{Schur}}(n,3) \le \frac{n^2}{67} + O(n), \hspace{20 pt } M_{\emph{Schur}}(n,4) \le \frac{n^2}{496} + O(n).$$
\end{reptheorem}

\begin{proof}[Proof of Theorem \ref{TheoremMuUpperBounds}]

A 3-coloring with $\frac{n^2}{67}$ monochromatic solutions to $x+y = z$ is $$0^{\frac{10n}{67}}1^{\frac{14n}{67}}0^{\frac{2n}{67}}2^{\frac{28n}{67}}0^{\frac{n}{67}}1^{\frac{11n}{67}}0^{\frac{n}{67}}$$

\begin{center}
\begin{tikzpicture}

\draw (0,0.1)--(0,-0.1) node[black,below]{0};
\draw [red, ultra thick] (0,0)--(16*10/67,0);

\draw (16*10/67,0.1)--(16*10/67,-0.1) node[black,below]{$\frac{10n}{67}$};
\draw [blue, ultra thick] (16*10/67,0)--(16*10/67+16*14/67,0);
\draw (16*10/67+16*14/67,0.1) --  (16*10/67+16*14/67,-0.1);
\draw [dashed](16*10/67+16*14/67,-0.15) -- (16*10/67+16*14/67-0.25,-0.5)node[black, below] { $\frac{24n}{67}$};
\draw [red, ultra thick] (16*10/67+16*14/67,0)--(16*26/67,0) ;
\draw [dashed](16*26/67,-0.15) -- (16*26/67+0.25,-0.5)node[black, below] { $\frac{26n}{67}$};
\draw (16*26/67,0.1) --  (16*26/67,-0.1);
\draw [green, ultra thick] (16*54/67,0)--(16*26/67,0);
\draw [dashed](16*55/67,-0.15) -- (16*55/67+0.25,-0.5)node[black, below] { $\frac{55n}{67}$};
\draw (16*55/67,0.1) --  (16*55/67,-0.1);

\draw (16*54/67,0.1) --  (16*54/67,-0.1);
\draw [dashed](16*54/67,-0.15) -- (16*54/67-0.25,-0.5)node[black, below] { $\frac{54n}{67}$};

\draw (16*66/67,0.1) --  (16*66/67,-0.1);
\draw [dashed](16*66/67,-0.15) -- (16*66/67-0.25,-0.5)node[black, below] { $\frac{66n}{67}$};

\draw [dashed](16*67/67,-0.15) -- (16*67/67+0.25,-0.5)node[black, below] { $n$};
\draw (16*67/67,0.1) --  (16*67/67,-0.1);

\draw [red, ultra thick] (16*54/67,0)--(16*55/67,0);
\draw [blue, ultra thick] (16*66/67,0)--(16*55/67,0);
\draw [red, ultra thick] (16*66/67,0)--(16*67/67,0);
\end{tikzpicture}
\end{center} 

A 4-coloring with $\frac{n^2}{496}$ monochromatic solutions to $x+y = z$ is $$0^{\frac{28}{496}}1^{\frac{38}{496}}0^{\frac{5}{496}}2^{\frac{75}{496}}0^{\frac{2}{496}}1^{\frac{30}{496}}0^{\frac{2}{496}}3^{\frac{182}{496}}0^{\frac{1}{496}}1^{\frac{29}{496}}0^{\frac{1}{496}}2^{\frac{72}{496}}0^{\frac{1}{496}}1^{\frac{29}{496}}0^{\frac{1}{496}}0^{\frac{1}{496}}$$

\begin{center}
\begin{tikzpicture}
\draw (0,0.1)--(0,-0.1) node[black,below]{0};
\draw [red, ultra thick] (0,0)--(16*28/496,0);
\draw [blue, ultra thick] (16*28/496,0)--(16*66/496,0);
\draw [red, ultra thick] (16*66/496,0)--(16*71/496,0);
\draw [green, ultra thick](16*71/496,0)--(16*146/496,0);
\draw [red, ultra thick] (16*146/496,0)--(16*148/496,0);
\draw [blue, ultra thick] (16*148/496,0)--(16*178/496,0);
\draw [red, ultra thick] (16*178/496,0)--(16*180/496,0);
\draw [yellow, ultra thick] (16*180/496,0)--(16*362/496,0);
\draw [red, ultra thick] (16*362/496,0)--(16*363/496,0);
\draw [blue, ultra thick] (16*363/496,0)--(16*392/496,0);
\draw [red, ultra thick] (16*392/496,0)--(16*393/496,0);
\draw [green, ultra thick](16*393/496,0)--(16*465/496,0);
\draw [red, ultra thick](16*465/496,0)--(16*466/496,0);
\draw [blue, ultra thick] (16*466/496,0)--(16*495/496,0);
\draw [red, ultra thick] (16*495/496,0)--(16*496/496,0);

\draw [black] (16*28/496,0.1)--(16*28/496,-0.1) node[black, below] { $\frac{28n}{496}$};
\draw [black] (16*66/496,0.1)--(16*66/496,-0.1);
\draw [black] (16*71/496,0.1)--(16*71/496,-0.1);
\draw [black](16*146/496,0.1)--(16*146/496,-0.1);
\draw [black] (16*148/496,0.1)--(16*148/496,-0.1);
\draw [black] (16*178/496,0.1)--(16*178/496,-0.1);
\draw [black] (16*180/496,0.1)--(16*180/496,-0.1);
\draw [black] (16*362/496,0.1)--(16*362/496,-0.1);
\draw [black] (16*363/496,0.1)--(16*363/496,-0.1);
\draw [black] (16*392/496,0.1)--(16*392/496,-0.1);
\draw [black] (16*393/496,0.1)--(16*393/496,-0.1);
\draw [black](16*465/496,0.1)--(16*465/496,-0.1);
\draw [black](16*466/496,0.1)--(16*466/496,-0.1);
\draw [black] (16*495/496,0.1)--(16*495/496,-0.1);
\draw [black] (16*496/496,0.1)--(16*496/496,-0.1);

\draw [dashed](16*495/496-.02,-0.15) -- (16*495/496-0.25,-0.5)node[black, below] { $\frac{495n}{496}$};
\draw [dashed](16*496/496+.02,-0.15) -- (16*496/496+0.25,-0.5)node[black, below] {$n$ };

\draw [dashed](16*465/496-.02,-0.15) -- (16*465/496-0.65,-0.5)node[black, below] { $\frac{465n}{496}$};
\draw [dashed](16*466/496+.02,-0.15) -- (16*466/496+0.02,-0.5)node[black, below] {$\frac{466n}{496}$};

\draw [dashed](16*392/496-.02,-0.15) -- (16*392/496-0.25,-0.5)node[black, below] { $\frac{392n}{496}$};
\draw [dashed](16*393/496+.02,-0.15) -- (16*393/496+0.5,-0.5)node[black, below] {$\frac{393n}{496}$ };

\draw [dashed](16*362/496-.02,-0.15) -- (16*362/496-0.65,-0.5)node[black, below] { $\frac{362n}{496}$};
\draw [dashed](16*363/496+.02,-0.15) -- (16*363/496+0.02,-0.5)node[black, below] {$\frac{363n}{496}$};

\draw [dashed](16*178/496-.02,-0.15) -- (16*178/496-0.25,-0.5)node[black, below] { $\frac{178n}{496}$};
\draw [dashed](16*180/496+.02,-0.15) -- (16*180/496+0.5,-0.5)node[black, below] {$\frac{180n}{496}$ };

\draw [dashed](16*146/496-.02,-0.15) -- (16*146/496-0.65,-0.5)node[black, below] { $\frac{146n}{496}$};
\draw [dashed](16*148/496+.02,-0.15) -- (16*148/496+0.02,-0.5)node[black, below] {$\frac{148n}{496}$};

\draw [dashed](16*66/496-.02,-0.15) -- (16*66/496-0.25,-0.5)node[black, below] { $\frac{66n}{496}$};
\draw [dashed](16*71/496+.02,-0.15) -- (16*71/496+0.25,-0.5)node[black, below] {$\frac{71n}{496}$ };
\end{tikzpicture}
\end{center} 

\end{proof} 

 Recall that an optimal coloring $\chi_2$ for $k = 2$ colors is the optimal coloring where the first $\frac{4n}{11}$ integers are the first color, the next $\frac{6n}{11}$ the second, and the last $\frac{n}{11}$ the third. We represent this optimal coloring as $\chi_2 := 0^{4n/11}1^{6n/11}0^{1/11}$. For a given $k$-coloring $ \chi = a_1^{e_1} a_2^{e_2} \cdots a_j^{e_j}$, (where $a_i \in [k]$ and $e_i \in \Z^+$)  define its \emph{block pattern} to be the word $a_1a_2\cdots a_j$. We say that a block pattern for a $k$-coloring is \emph{greedy-palindromic} if it is of the form $P_k$, where $P_1 = 0$, $P_{k} = P_{k-1}(k-1)P_{k-1}$ for $k \ge 2$. Observe that $P_k = a_1\cdots a_{2^k-1}$ has length $2^k-1$ and that $a_j = \nu_2(j)$, where $\nu_2(j)$ denotes the 2-adic valuation of $j$, the highest power of $2$ that divides $j$. 

Greedy palindromic colorings are not new to the study of Schur numbers. they can be used to give the lower bound $S(k) \ge 3S(k-1)-1$ (see \cite{BBGeneralizedSchur}). Though this is not tight for $k > 3$. Our method for finding these new colorings was to optimize, for each $k$, a certain polynomial $p_k$ whose variables are the lengths of the intervals in a greedy palindromic coloring. The optimal solutions minimize the total number of monochromatic solutions where, for each monochromatic solution $(x,y,z)$ in color $c$, at least one of $x,y$, or $z$ comes from the \emph{first} block of color $c$. The polynomial $p_k$ is defined as follows. 

$$p_k = \sum_{i = 1}^{2^k-1} C_i(x_i - s_{a_i})^2,$$ where 
$$C_i = \begin{cases}
    \frac 12 & \text{if } i = 2^j \text{ for some } j \ge 0 \\
    1 & \text{otherwise.}
\end{cases},\qquad  s_{a_i} = \sum_{j=1}^{2^{a_i}-1} x_j. $$

The optimization problem is then 
\begin{align}\label{SchurPolyOptimization}
    &\text{minimize } p_k \\
    &\nonumber\text{subject to }  \sum_{i=1}^{2^k-1}x_i = n.
\end{align}

For $k = 2$, the optimal solution to \eqref{SchurPolyOptimization} gives the optimal coloring. For $k = 3,4$ we obtain the colorings used in Theorem \ref{TheoremMuUpperBounds}, and $p_k$ here counts the total number of monochromatic solutions.

When $k \ge 5$, we can of course still optimize $p_k$. However, $p_k$ no longer counts the total number of monochromatic solutions, because there are solutions that do not use integers from the first block in that color. 


Given a $(2^{k}-1)$-tuple  $(e_1,\dots, e_{2^k-1})$ and integer $n$, let $M = \sum_{i=1}^{2^{k}-1} e_i$, and let 
$C_{P_k}(e_1,\dots,e_{2^k-1})$ denote the coloring $a_1^{e_1n/M} \cdots a_{2^k-1}^{e_{2^k-1}n/M}$, where $P_k = a_1\cdots a_{2^k-1}$ as above. In this notation, we have $\chi_2 = C_{P_2}(4,6,1)$. 
For $3 \le k \le 8$, the optimal solutions to \label{SchurPolyOptimization} are the following:

\begin{align*}
    \chi_3 &= C_{P_3}(10,14,2,28,1,11,1), \\
    \chi_4 &= C_{P_4}(28,38,5,75,2,30,2,182,1,29,1,72,1,29,1),\\
    \chi_5 &= C_{P_5}(82, 110, 14, 216, 5, 87, 5, 523, 2, 84, 2, 208, 2, 84, 2, 1428, 1, 83, 1, 207, 1, 83, 1, 520, 1, 83, 1, 207, 1, 83, 1),\\
\chi_6 &= C_{P_6}(244, 326, 41, 639, 14, 258, 14, 1546, 5, 249, 5, 616, 5, 249,   5, 4220, 2, 246, 2, 613, 2, 246, 2, 1538, 2, 246, 2,\\&\hspace{35pt} 613, 2, 246, 2, 12202, 1, 245, 1, 612, 1, 245, 1, 1537, 1, 245, 1, 612, 1, 245, 1, 4217, 1, 245, 1, 612, 1, 245, 1,\\&\hspace{35pt} 1537, 1, 245, 1, 612, 1, 245, 1) \\
\chi_7 &= C_{P_7}(730, 974, 122, 1908, 41, 771, 41, 4615, 14, 744, 14, 1840, 14, 744, 14, 12596, 5, 735, 5, 1831, 5, 735, 5, \\&\hspace{35pt} 4592, 5, 735, 5, 1831, 5, 735, 5, 36420, 2, 732, 2, 1828, 2, 732, 2, 4589, 2, 732, 2, 1828, 2, 732, 2, 12588, 2, \\&\hspace{35pt} 732, 2,  1828, 2, 732, 2, 4589, 2, 732, 2, 1828, 2, 732, 2, 107804, 1, 731, 1, 1827, 1, 731, 1, 4588, 1, 731, 1,\\&\hspace{35pt} 1827, 1, 731, 1, 12587, 1, 731, 1, 1827, 1, 731, 1, 4588, 1, 731, 1, 1827, 1, 731, 1, 36417, 1, 731, 1, 1827, 1,\\&\hspace{35pt} 731, 1, 4588, 1, 731, 1, 1827, 1, 731, 1,  12587, 1, 731, 1, 1827, 1, 731, 1, 4588, 1, 731, 1, 1827, 1, 731, 1) \\
\chi_8 &= C_{P_8}(
2188, 2918, 365, 5715, 122, 2310, 122, 13822, 41, 2229, 41, 5512, 41, 2229, 41, 37724, 14, 2202, 14, 5485, \\&\hspace{35pt}
14, 2202, 14, 13754, 14, 2202, 14, 5485, 14, 2202, 14, 109074, 5, 2193, 5, 5476, 5, 2193, 5, 13745, 5, 2193,\\&\hspace{35pt} 
5, 5476, 5, 2193, 5, 37701, 5, 2193, 5, 5476, 5, 2193, 5, 13745, 5, 2193, 5, 5476, 5, 2193, 5, 322861, 2, 2190, \\&\hspace{35pt} 
2, 5473, 2, 2190, 2, 13742, 2, 2190, 2, 5473, 2, 2190, 2, 37698, 2, 2190, 2, 5473, 2, 2190, 2, 13742, 2, 2190,\\&\hspace{35pt} 
2, 5473, 2, 2190, 2, 109066, 2, 2190, 2, 5473, 2, 2190, 2, 13742, 2, 2190, 2, 5473, 2, 2190, 2, 37698, 2, 2190, \\&\hspace{35pt} 
2, 5473, 2, 2190, 2, 13742, 2, 2190, 2, 5473, 2, 2190, 2,964038, 1, 2189, 1, 5472, 1, 2189, 1,\\&\hspace{35pt} 
13741, 1, 2189, 1, 5472, 1, 2189, 1, 37697, 1, 2189, 1, 5472, 1, 2189, 1, 13741, 1, 2189, 1, 5472, 1, 2189, 1,\\&\hspace{35pt} 
109065, 1, 2189, 1, 5472, 1, 2189, 1, 13741, 1, 2189, 1, 5472, 1, 2189, 1, 37697, 1, 2189, 1, 5472, 1, 2189, 1,\\&\hspace{35pt} 
13741, 1, 2189, 1, 5472, 1, 2189, 1, 322858, 1, 2189, 1, 5472, 1, 2189, 1, 13741, 1, 2189, 1, 5472, 1, 2189, 1,\\&\hspace{35pt} 
37697, 1, 2189, 1, 5472, 1, 2189, 1, 13741, 1, 2189, 1, 5472, 1, 2189, 1, 109065, 1, 2189, 1, 5472, 1, 2189, 1,\\&\hspace{35pt} 
13741, 1, 2189, 1, 5472, 1, 2189, 1, 37697, 1, 2189, 1, 5472, 1, 2189, 1, 13741, 1, 2189, 1, 5472, 1, 2189, 1)
\end{align*}






These colorings appear to follow a predictable pattern, which we conjecture holds for all $k$. 
For a given color $c \in [k]$, let $S_c := \sum_{i=1}^{2^{c-1}-1}e_i$, that is the sum of the $e_i$ before color $c$ appears. Let $c_{max}(j) = \lfloor{\log_2(j)}\rfloor +1 = \max_{i<j} a_i$ denote the largest color that has appeared before the $j$-th interval. 

$$e_j(k)  := \begin{cases}
    S_i + 3^{k-i}+1 &\text{ if } j = 2^{i-1}, \\
    S_c + \frac{3^{k-c_{max}(j)}+1}{2} & \text{ if $a_j = c$ and $j \neq 2^{i-1}$.} 
\end{cases}.$$

For $k = 5$, the coloring $\chi_5$ has $\frac{4128+22^2}{4128^2}n^2 = \frac{1153}{4260096}n^2$ solutions, which is more than the number from the 5-coloring obtained by Thanatipanonda \cite{ThanatipanondaSchurTriplesProblem2009}. It is unclear whether the colorings $\chi_k$ are optimal for $k \ge 3$. Although for $k = 3$, there is some experimental evidence for optimality. Colorings found by local search methods similar to those done in \cite{ButlerCostelloGrahamConstellations} are similar to $\chi_3$. We leave further analysis of $\chi_k$ for $k \ge 3$ and determining whether these colorings are optimal as an open question.  






\vskip .5cm

\section*{Acknowledgements} 
The research of the first three authors was partially supported by the NSF grant DMS-2348578. The authors thank Adriana Hansberg and Amanda Montejano for suggestions and ideas at the beginning of this project. The second author is grateful for the financial support received from the UC Davis Chancellor's Postdoctoral Fellowship Program. She is also grateful to Adriana Hansberg and Amanda Montejano for their mentoring during her doctoral and master's studies. 

\bibliographystyle{abbrv}
\bibliography{bibliography}

\begin{appendices}
    \section{Data for SDP lower bounds}\label{Appendix_SPD_lower_bounds}
    In this appendix, we explicitly show the values for $d$ using Lemma \ref{Lemma_QuadraticFormPSDBound} for the equation $ax-ay = z$.\\

    $a=3: d = \frac{n^2}{209952}(62, 66, 70, 74, 78, 82, 86, 90, 94, 98, 102, 106, 110, 114, 118, 122, 126, 130, 132, 132, 132,\\ 132, 132, 132, 132, 132, 132, 132, 132, 132, 132, 132, 132, 132, 132, 132, 130, 126, 122, 118, 114, 110, 106, 102,\\ 98, 94, 90, 86, 82, 78, 74, 70, 66, 62, 61, 59, 57, 53, 51, 49, 45, 43, 41, 37, 35, 33, 29, 27, 25, 21, 19, 17, 14, 14, 14,\\ 12, 12, 12, 10, 10, 10, 8, 8, 8, 6, 6, 6, 4, 4, 4, 3, 5, 7, 7, 9, 11, 11, 13, 15, 15, 17, 19, 19, 21, 23, 23, 25, 27)$

    $a=4: d = \frac{n^2}{32768}(3,9,15,21,24,24,24,24,24,24,24,24,21,15,9,3,41,39,37,35,30,30,30,30,\\26,26,26,26,23,25,27,29)$

    $a =5: d= \frac{n^2}{125000}(4, 12, 20, 28, 36, 40, 40, 40, 40, 40, 40, 40, 40, 40, 40, 40, 40, 40, 40, 40, 36, 28, 20, 12, 4,\\  121, 119, 117, 115, 113, 106, 106, 106, 106, 106, 100, 100, 100, 100, 100, 94, 94, 94, 94, 94, 89, 91, 93, 95,97)$

    $a=6: d = \frac{n^2}{373248}(5, 15, 25, 35, 45, 55, 60, 60, 60, 60, 60, 60, 60, 60, 60, 60, 60, 60, 60, 60, 60, 60, 60, 60, 60,\\ 60, 60, 60, 60, 60, 55, 45, 35, 25, 15, 5, 253, 251, 249, 247, 245, 243, 234, 234, 234, 234, 234, 234, 226, 226, 226,\\ 226, 226, 226, 218, 218, 218, 218, 218, 218, 210, 210, 210, 210, 210, 210, 203, 205, 207, 209, 211, 213)$

    $a=7: d= \frac{n^2}{941192}(6, 18, 30, 42, 54, 66, 78, 84, 84, 84, 84, 84, 84, 84, 84, 84, 84, 84, 84, 84, 84, 84, 84, 84, 84, 84,\\ 84, 84, 84, 84, 84, 84, 84, 84, 84, 84, 84, 84, 84, 84, 84, 84, 78, 66, 54, 42, 30, 18, 6, 449, 447, 445, 443, 441, 439, 437,\\ 426, 426, 426, 426, 426, 426, 426, 416, 416, 416, 416, 416, 416, 416, 406, 406, 406, 406, 406, 406, 406, 396, 396, 396, \\396, 396, 396, 396, 386, 386, 386, 386, 386, 386, 386, 377, 379, 381, 383, 385, 387, 389)$
    
    The following is the value of $d$ for $x+y = 3z:$ 

\begin{align*}
    d &= \frac{n^2}{259200}(1.52625029424193,7.13472273267136,9.82265291183994,7.83419180674646,13.2109296915416,\\&20.8751792783345,15.3089585102576,9.25870080147831,8.19058277713545,13.1809852450228,\\&13.1809852450181,16.0328894562698,6.91545792002705,0.599907563422803,9.50693006172301,\\&10.5063647943975,19.1826045564940,15.8643109427448,9.77858541708185,13.6613915855486,\\&19.9455937540945,24.4274671277702,24.4274671277680,24.4274671277802,25.4460632299112,\\&25.4460632299175,25.4460632299234,22.6076114659320,22.6076114659365,22.6076114659281,\\&1.52625029424280,7.13472273266820,9.82265291183396,7.83419180674915,13.2109296915387,\\&20.8751792783391,15.3089585102469,9.25870080147229,8.19058277713694,13.1809852450137,\\&13.1809852450188,16.0328894562674,6.91545792001992,0.599907563423494,9.50693006171557,\\&10.5063647944014,19.1826045564880,15.8643109427641,9.77858541708290,13.6613915855242,\\&19.9455937540848,24.4274671277696,24.4274671277692,24.4274671277664,25.4460632299061,\\&25.4460632299254,25.4460632299143,22.6076114659347,22.6076114659356,22.6076114659403,\\&1.52625029424287,7.13472273266038,9.82265291183561,7.83419180674793,13.2109296915385,\\&20.8751792783367,15.3089585102529,9.25870080147053,8.19058277713283,13.1809852450232,\\&13.1809852450176,16.0328894562679,6.91545792003126,0.599907563422704,9.50693006172368,\\&10.5063647944013,19.1826045564943,15.8643109427621,9.77858541707903,13.6613915855242,\\&19.9455937540863,24.4274671277702,24.4274671277736,24.4274671277731,25.4460632299182,\\&25.4460632299198,25.4460632299157,22.6076114659358,22.6076114659345,22.6076114659353)
\end{align*}

    

\section{Integer Programming Results}
\label{amyIPexperiments}
 
 In this appendix, we provide additional results for equations $ax+ay=z$, $ax-ay=z$, $x+y=az$, and $ax+by=z$ that are not listed in the main content of the paper. To show a $2$-coloring of $[n]$, we use $0$ and $1$ to represent each color in the order they appear in the coloring. The upper index represents the number of times the values within the parenthesis are being colored with those colors and their order. For example, $0^3(10)^2 1^2 0$ represents a $2$-coloring of $[10]$ where the integers are colored in the following order: $0001010110$.\\
 
 In the following tables, we show results for a particular equation ${\cal E}$. We abbreviate by $G_{\cal E}(n,a)$ the value of the upper bound of $M_{\cal E}(n)$ obtained in this work. Similarly, we will use $G_{\cal E}(n,a,b)$ to abbreviate the value of the upper bound of $M_{\cal E}(n)$, given here, when ${\cal E}$ is a three-term linear equation that depends on positive integers $a$ and $b$.\\

\subsection*{Experimental Results for 2-colorings for $ax+ay=z$} 
\begin{table}[H]
\centering
\begin{tabular}  {|p{0.1\textwidth}|p{0.15\textwidth}|p{0.15\textwidth}|p{0.15\textwidth}|p{0.15\textwidth}|}
           \hline
           $n$&  $34$&  $50$&  $75$& $100$\\
           \hline
           $M_{\cal E}(n)$&  $1$&  $2$&  $4$& $7$\\
           \hline
           $G_{\cal E}(n,2)$&   $31$ &  $71$& $166$& $300$\\
           \hline
           $T_{\cal E}(n)$&  $72$&  $156$&  $342$& $625$\\
           \hline
           \textbf{Optimal Coloring}&  $1^{3}0^{14}(10){8}1$&  $1^{7}0^{24}(10)^{9}1$&  $1^{7}010^{28}(10)^{19}$& $1^{11}0^{38}(10)^{25}1$\\
           \hline
      
\end{tabular}
\caption{Results for ${\cal E}: 2x+2y=z$.}
\label{table:2x+2y=z}
\end{table}

\begin{table}[H]
\centering
\begin{tabular}  {|p{0.1\textwidth}|p{0.15\textwidth}|p{0.15\textwidth}|p{0.15\textwidth}|p{0.15\textwidth}|}
           \hline
           $n$&  $111$&  $150$&  $200$& $250$\\
           \hline
           $M_{\cal E}(n)$&  $1$&  $1$&  $2$& $4$\\
           \hline
           $G_{\cal E}(n,3)$&   $69$ &  $130$& $235$& $371$\\
           \hline
           $T_{\cal E}(n)$&  $342$&  $625$&  $1089$& $1722$\\
           \hline
           \textbf{Optimal Coloring}&  $1^{6}0^{32}(100)^{24}1$&  $0^{8}1^{41}010^{99}$&  $0^{11}1^{55}0^{2}10^{131}$& $0^{13}1^{69}0^{168}$\\
           \hline
      
\end{tabular}
\caption{Results for ${\cal E}: 3x+3y=z$.}
\label{table:3x+3y=z}
\end{table}

\subsection*{Experimental Results of 2-colorings for $ax-ay=z$} 

\begin{table}[H]
\centering
\begin{tabular}{|p{0.1\textwidth}|p{0.15\textwidth}|p{0.15\textwidth}|p{0.15\textwidth}|p{0.15\textwidth}|}
           \hline
           $n$&    $25$&  $50$& $75$& $100$\\
           \hline
           $M_{\cal E}(n)$&   $5$&  $30$& $62$& $129$\\
           \hline
           $G_{\cal E}(n,4)$&   $7$ &  $32$& $74$& $133$\\
           \hline
           $T_{\cal E}(n)$&  $129$&  $522$&  $1179$&  $2175$\\
           \hline
           \multicolumn{5}{|c|}{ \textbf{Optimal Coloring:} integers $v\leq n$, get $0$ if $v\equiv 0 \pmod a$, and $1$ otherwise.}\\
           \hline
      
\end{tabular} 
\caption{Results for ${\cal E}: 4x-4y=z$.}
\label{table:4x-4y=z}
\end{table}

\subsection*{Experimental Results of 2-colorings for $x+y=az$}

\begin{table}[H]
\centering
\begin{tabular}{|p{0.1\textwidth}|p{0.15\textwidth}|p{0.15\textwidth}|p{0.15\textwidth}|p{0.15\textwidth}|}
           \hline
           $n$&    $25$&  $50$& $75$& $100$\\
           \hline
           $M_{\cal E}(n)$&   $12$&  $54$& $132$& $231$\\
           \hline
           $G_{\cal E}(n,3)$&   $17$ &  $69$& $156$& $277$\\
           \hline
          $T_{\cal E}(n)$&  $208$&  $834$&  $1875$& $3333$\\
           \hline
           \textbf{Optimal Coloring}& $(010)(011)(010)^3$ $(011)^30$& $(101)^2(100)^3$ $(101)^5(100)^610$& $(011)^3(010)^4$ $(011)^9(010)^9$&  $(010)(011)(010)^3$ $(011)^5(010)^{11}$ $(011)^{12}0$\\ 
           \hline
      
\end{tabular}
\caption{Results for ${\cal E}: x+y=3z$.}
\label{table:x+y=3z}
\end{table}

\subsection*{Experimental Results of 2-colorings for $ax+by=z$}






Tables \ref{table:2x+5y=z}, \ref{table:2x+7y=z}, and \ref{table:3x+5y=z} show results when $gcd(a,b)=1$.

\begin{table}[H]
\centering
\begin{tabular}{|p{0.1\textwidth}|p{0.15\textwidth}|p{0.15\textwidth}|p{0.15\textwidth}|p{0.15\textwidth}|}
           \hline
           \multicolumn{5}{|c|}{$2x+5y=z, \hspace{2pt}\ R_2(2x+5y=z)=103$} \\
           \hline
           $n$&  $200$&  $250$&  $300$& $400$\\
           \hline
           $M_{\cal E}(n)$&  $5$&  $10$&  $15$& $30$\\
           \hline
           $T_{\cal E}(n)$&  $1940$&  $3050$&  $4410$& $7880$\\
           \hline
           \textbf{Optimal Coloring}&  $0^{13}1^{84}(01)^2$ $0^{99}$&  $0^{16}101^{104}(01)^2$ $0^{3}10^{3}10^{114}$&  $1^{20}0^{128}1^{152}$& $1^{27}0^{170}1^{203}$\\
           \hline
      
\end{tabular}
\caption{Results for ${\cal E}: 2x+5y=z$.}
\label{table:2x+5y=z}
\end{table}

\begin{table}[H]
\centering
\begin{tabular}{|p{0.1\textwidth}|p{0.15\textwidth}|p{0.15\textwidth}|p{0.15\textwidth}|p{0.15\textwidth}|}
           \hline
           \multicolumn{5}{|c|}{$2x+7y=z, \hspace{2pt}\ R_2(2x+7y=z)=169$} \\
           \hline
           $n$&  $200$&  $250$&  $300$& $400$\\
           \hline
           $M_{\cal E}(n)$&  $1$&  $3$&  $5$& $10$\\
           \hline
           $T_{\cal E}(n)$&  $1372$&  $2161$&  $3129$& $5600$\\
           \hline
           \textbf{Optimal Coloring}&  $1^{10}0^{86}101^3$ $0101^{96}$&  $0^{13}1^{112}(01)^30^{119}$&  $1^{16}0^{131}1^201^{150}$& $1^{21}0^{176}(10)^31^{197}$\\
           \hline
      
\end{tabular}
\caption{Results for ${\cal E}: 2x+7y=z$.}
\label{table:2x+7y=z}
\end{table}

\begin{table}[H]
\centering
\begin{tabular}{|p{0.1\textwidth}|p{0.15\textwidth}|p{0.15\textwidth}|p{0.15\textwidth}|p{0.15\textwidth}|}
           \hline
           \multicolumn{5}{|c|}{$3x+5y=z, \hspace{2pt}\ R_2(3x+5y=z)=197$} \\
           \hline
           $n$&  $200$&  $250$&  $300$& $400$\\
           \hline
            $M_{\cal E}(n)$&  $1$&  $1$&  $2$& $5$\\
           \hline
           $T_{\cal E}(n)$&  $1287$&  $2025$&  $2930$& $5240$\\
           \hline
           \textbf{Optimal Coloring}&  $0^{10}101^{75}(011)^3$ $0^{5}10^{2}10^510^{89}$&  $0^{10}101^{2}01^{72}01^{3}$ $(01)^2 10^{2}1^{4}(001)^2$ $01(100)^40^{3}$ $10^{8}10^{115}$&  $0^{12}1^{2}01^{85}010^{2}$ $1^{2}0101^{2}0^{2}1^{2}$ $010^{2}10^{5}10^{174}$& $1^{16}0^{119}10^{2}1$ $01^{2}01^{257}$\\
           \hline
      
\end{tabular}
\caption{Results for ${\cal E}: 3x+5y=z$.}
\label{table:3x+5y=z}
\end{table}

Tables \ref{table:2x+4y=z} - \ref{table:3x+6y=z} are results for $gcd(a,b)>1$. We currently do not have a general upper bound for equations of this sort. This is a problem left for future studies.

\begin{table}[H]
\centering
\begin{tabular}{|p{0.1\textwidth}|p{0.15\textwidth}|p{0.15\textwidth}|p{0.15\textwidth}|p{0.15\textwidth}|}
           \hline
           \multicolumn{5}{|c|}{$2x+4y=z, \hspace{2pt}\ R_2(2x+4y=z)=76$} \\
           \hline
           $n$&  $100$&  $150$&  $200$& $400$\\
           \hline
           $M_{\cal E}(n)$&  $2$&  $5$&  $11$& $52$\\
           \hline
           $T_{\cal E}(n)$&  $600$&  $1369$&  $2450$& $9000$               \\
           \hline
           \textbf{Optimal Coloring}&  $1^{8}0^{41}(10)^{50}1$&  $0^{11}1^{60}010^{77}$&  $0^{15}101^{81}0^{102}$& $1^{31}010^{166}(10)^{100}1$\\
           \hline
      
\end{tabular}
\caption{Results for ${\cal E}: 2x+4y=z$.}
\label{table:2x+4y=z}
\end{table}

\begin{table}[H]
\centering
\begin{tabular}{|p{0.1\textwidth}|p{0.15\textwidth}|p{0.15\textwidth}|p{0.15\textwidth}|p{0.15\textwidth}|}
           \hline
           \multicolumn{5}{|c|}{$2x+6y=z, \hspace{2pt}\ R_2(2x+6y=z)=134$} \\
           \hline
           $n$&  $200$&  $250$&  $300$& $400$\\
           \hline
           $M_{\cal E}(n)$&  $3$&  $5$&  $9$& $18$\\
           \hline
          $T_{\cal E}(n)$&  $1617$&  $2542$&  $3675$& $6567$               \\
           \hline
           \textbf{Optimal Coloring}&  $0^{13}1^{85}(01)^3$ $0^31010^{90}$&  $1^{15}0^{108}(10)^{63}1$&  $1^{17}0^{130}(10)^{76}1$& $1^{25}0^{178}10^3(10)^{96}1$\\
           \hline
      
\end{tabular}
\caption{Results for ${\cal E}: 2x+6y=z$.}
\label{table:2x+6y=z}
\end{table}

\begin{table}[H]
\centering
\begin{tabular}{|p{0.1\textwidth}|p{0.15\textwidth}|p{0.15\textwidth}|p{0.15\textwidth}|p{0.15\textwidth}|}
           \hline
           \multicolumn{5}{|c|}{$2x+8y=z, \hspace{2pt}\ R_2(2x+8y=z)=208$} \\
           \hline
           $n$&    $250$&  $300$& $350$& $400$\\
           \hline
           $M_{\cal E}(n)$&   $2$&  $3$& $4$& $6$\\
           \hline
          $T_{\cal E}(n)$&  $1891$&  $2738$&  $3741$&  $4900$               \\
           \hline
           \textbf{Optimal Coloring}&  $1^{13}0^{118}(1000)^2$ $(10)^{55}1$&  $0^{15}1^{131}(01)^{2}$ $0^5(10)^20^{141}$&  $1^{17}0^{162}(10)^{85}1$& $1^{19}0^{180}(10)^{100}1$\\
           \hline
      
\end{tabular}
\caption{Results for ${\cal E}: 2x+8y=z$.}
\label{table:2x+8y=z}
\end{table}

\begin{table}[H]
\centering
\begin{tabular}{|p{0.1\textwidth}|p{0.15\textwidth}|p{0.15\textwidth}|p{0.15\textwidth}|p{0.15\textwidth}|}
           \hline
           \multicolumn{5}{|c|}{$3x+6y=z, \hspace{2pt}\ R_2(3x+6y=z)=249$} \\
           \hline
           $n$&    $250$&  $300$& $350$& $400$\\
           \hline
           $M_{\cal E}(n)$&   $1$&  $1$& $2$& $2$\\
           \hline
          $T_{\cal E}(n)$&  $1681$&  $2450$&  $3306$&  $4536$               \\
           \hline
           \textbf{Optimal Coloring}&  $0^{9}1^{72}0^210^{2}10^{163}$&  $1^{10}0^{88}(100)^{67}1$&  $0^{13}1^{101}0^210^{233}$& $1^{14}0^{117}(100)^{89}10$\\
           \hline
      
\end{tabular}
\caption{Results for ${\cal E}: 3x+6y=z$.}
\label{table:3x+6y=z}
\end{table}
\end{appendices}

\end{document}